\providecommand{\U}[1]{\protect\rule{.1in}{.1in}}
\newtheorem{theorem}{Theorem}[section]
\newtheorem{corollary}[theorem]{Corollary}
\newtheorem{problem}[theorem]{Problem}
\newtheorem{proposition}[theorem]{Proposition}
\newtheorem{remark}[theorem]{Remark}
\numberwithin{equation}{section}
\begin{document}

\title{A practical method for recovering Sturm-Liouville problems from the Weyl function}
\author{Vladislav V. Kravchenko$^{1}$, Sergii M. Torba$^{1}$\\{\small $^{1}$ Departamento de Matem\'{a}ticas, Cinvestav, Unidad
Quer\'{e}taro, }\\{\small Libramiento Norponiente \#2000, Fracc. Real de Juriquilla,
Quer\'{e}taro, Qro., 76230 MEXICO.}\\{\small e-mail: vkravchenko@math.cinvestav.edu.mx,
storba@math.cinvestav.edu.mx}}
\maketitle

\begin{abstract}
In the paper we propose a direct method for recovering the Sturm-Liouville
potential from the Weyl-Titchmarsh $m$-function given on a countable set of
points. We show that using the Fourier-Legendre series expansion of the
transmutation operator integral kernel the problem reduces to an infinite
linear system of equations, which is uniquely solvable if so is the original
problem. The solution of this linear system allows one to reconstruct the
characteristic determinant and hence to obtain the eigenvalues as its zeros
and to compute the corresponding norming constants. As a result, the original
inverse problem is transformed to an inverse problem with a given spectral
density function, for which the direct method of solution from \cite{KT2020}
is applied.

The proposed method leads to an efficient numerical algorithm for solving a
variety of inverse problems. In particular, the problems in which two spectra
or some parts of three or more spectra are given, the problems in which the
eigenvalues depend on a variable boundary parameter (including spectral
parameter dependent boundary conditions), problems with a partially known
potential and partial inverse problems on quantum graphs.

\end{abstract}

\section{Introduction}

We consider the inverse problem of recovering the potential of a
Sturm-Liouville equation on a finite interval together with the boundary
conditions from values of the Weyl-Titchmarsh $m$-function on a given set of points.

A fundamental result of Marchenko \cite{Marchenko52} states that the
Weyl-Titchmarsh function $m(z)$ determines uniquely the potential $q\in
L_{2}(0,\pi)$ as well as the boundary conditions. So some particular inverse
problem is solved, at least in theory, if one finds a way to convert it to a
corresponding problem for the $m$-function. However, several difficulties
arise. For some inverse spectral problems the given spectral data allow one to
obtain the values of the $m$-function at any point $z\in\mathbb{C}$. This is
the case of the classical inverse problem with a given spectral density
function, formula \eqref{M via spectral data} below provides the $m$-function.
While for many problems only the values of the $m$-function on a countable set
of points $\{z_{k}\}_{k=0}^{\infty}$ can be obtained. See Subsection
\ref{Subsect Example Problems} and references therein for some examples. Thus
one faces two principle questions. Is it possible to recover uniquely the
$m$-function from the given values $m(z_{k})$, $k=0,1,\ldots$? If yes, how can
that be done?

Since the $m$-function is meromorphic, the unique recovery is always possible
if the sequence $\{z_{k}\}_{k=0}^{\infty}$ contains a subsequence $z_{n_{k}}$
converging to a finite limit and such that the values $m(z_{n_{k}})$ are
bounded. This is usually not the case when we start with some inverse problem,
the points $z_{k}$ are real and go to infinity when $k\rightarrow\infty$.
Suppose all the points $z_{k}$ are real. A simple sufficient condition for the
unique recoverability of the $m$-function from the given values $m(z_{k})$ was
given in \cite{GRS1997}, and a complete answer was given not so long ago in
\cite{Horvath2005}. Simply speaking, the points $z_{k}$ should be distributed
sufficiently densely, like the points obtained as a union of two spectra.

The corresponding results are obtained by using infinite products and analytic
continuation. And as it is stated in the following quote \cite[p.106]{Chadan
et al 1997},

\begin{quote}
It is one of the guiding principles of computational analysis that such proofs
often lead to elegant mathematics but are useless as far as constructive
methods are concerned.
\end{quote}

The only constructive method for finding the $m$-function we are aware of is
that from \cite{Bondarenko2020}, \cite{YBX2020} (based on the ideas of \cite{Rundell Sacks}
and \cite[Chapter 3]{Chadan et al 1997}). This algorithm recovers a certain
combination of the integral kernel of the transmutation operator with its
derivative by using non-harmonic Fourier series. However it works only for sequences $z_n$ such that the sequence of functions
$\{e^{\pm i\sqrt{z_n}t}\}_{n\in \mathbb{N}}$ is a Riesz basis in $L_2(-2\pi,2\pi)$,
requires
additionally a beforehand knowledge of the parameter $\omega$ (depending on
the average of the potential $q$ and boundary parameter $H$) and numerical
realization of this method is expected to converge slowly (no numerical
illustration is provided in \cite{Bondarenko2020} or \cite{YBX2020}).

We use Fourier-Legendre series to represent the transmutation integral kernel
and its derivative. This idea was originally proposed in \cite{KNT} and
resulted in a Neumann series of Bessel functions (NSBF) representation for the
solutions and their derivatives of Sturm-Liouville equations. As a result, a
constructive algorithm for recovering the $m$-function from its values on a
countable set of points is proposed. The problem is reduced to an infinite
system of linear equations and the unique solvability of this system is
proved. We would like to emphasize that no additional information is necessary beforehand.

Moreover, the proposed method leads to an efficient numerical algorithm. Since
the NSBF representation possesses such a remarkable property as a uniform
error bound for all real $\lambda>0$, one can obtain any finite set of
approximate eigenvalues and corresponding norming constants with a
non-deteriorating accuracy. That is, an efficient conversion of the original
problem to an inverse problem by a given spectral density function is
proposed. The method of solution introduced in \cite{Kr2019JIIP},
\cite{KrBook2020} and refined in \cite{KT2020} is adapted to recover the
potential and the boundary conditions from the spectral density function.

For classical inverse spectral problems there are many results describing to
what extent the potential can be recovered from a finite set of spectral data,
see, e.g., \cite{Hochstadt1973}, \cite{Neher1994}, \cite{SSh2014},
\cite{Savchuk2016}. For the Weyl function, on the contrary, we are not aware
of any single result. It can be seen that the values of $z_{k}$, $0\leq k\leq
K$ can be chosen neither from a too small interval nor too sparse. Indeed,
taking all the values $z_{k}$ from one spectrum is insufficient to recover the
potential. On the other hand, as was mentioned in \cite[Subsections 3.10.5 and
3.14]{Chadan et al 1997}, when all the values $z_{k}$ are equal to the first
eigenvalue $\lambda_{0}$ (for problems with different boundary conditions),
the corresponding numerical problem is extremely ill-conditioned. So the
proposed algorithm struggles as well when the points $z_{k}$ either belong to
a small interval or are too sparse. Nevertheless, it delivers excellent
results for smooth potentials and sufficiently dense points $z_{k}$, does not
require any additional information on the potential or boundary conditions and
can be applied to a variety of inverse problems for which not so many (if any)
specialized algorithms are known. So we believe the presented method will be a
starting point for the future research of the question \textquotedblleft to
which extent a potential can be recovered from a given finite set of values of
the Weyl-Titchmarsh $m$-function\textquotedblright, as well as of a new class
of numerical methods based on the Weyl-Titchmarsh $m$-function.

The paper is organized as follows. In Section \ref{Sect2} we provide necessary
information on the Weyl function, transmutation operators and Neumann series
of Bessel functions representations. In Section \ref{Sect3} we show how the
inverse problem of recovering the Sturm-Liouville problem from the Weyl
function is reduced to an infinite system of linear algebraic equations, prove
its unique solvability and show the reduction of the original problem to the
classical problem of the recovery of a Sturm-Liouville problem from a given
spectral density function; an algorithm is provided. Finally in Section
\ref{Sect Numerical examples} we present several numerical examples
illustrating the efficiency and some limitations of the proposed method.

\section{Preliminaries}

\label{Sect2}

\subsection{The Weyl function and spectral data}

Let $q\in L_{2}(0,\pi)$ be real valued. Consider the Sturm-Liouville equation
\begin{equation}
-y^{\prime\prime}+q(x)y=\rho^{2}y,\quad x\in(0,\pi), \label{SL equation}%
\end{equation}
where $\rho\in\mathbb{C}$, with the boundary conditions
\begin{equation}
y^{\prime}(0)-hy(0)=0,\quad y^{\prime}(\pi)+Hy(\pi)=0, \label{bc1}%
\end{equation}
where $h$ and $H$ are arbitrary real constants, and the boundary value
problem
\begin{equation}
y^{\prime}(0)-hy(0)=1,\quad y^{\prime}(\pi)+Hy(\pi)=0. \label{bvp1}%
\end{equation}

If $\lambda= \rho^{2}$ is not an eigenvalue of the spectral problem
\eqref{SL equation}, \eqref{bc1}, then the boundary value problem
\eqref{SL equation}, \eqref{bvp1} possesses a unique solution, which we denote
by $\Phi(\rho, x)$. Let
\[
M(\lambda) := \Phi(\rho, 0).
\]
The functions $\Phi(\rho,x)$ and $M(\lambda)$ are called the Weyl solution and
the Weyl function, respectively. See \cite[Section 1.2.4]{Yurko2007} for
additional details.

By $\varphi(\rho,x)$ and $S(\rho,x)$ we denote the solutions of
\eqref{SL equation} satisfying the initial conditions
\begin{equation}
\varphi(\rho,0)=1\quad\text{and}\quad\varphi^{\prime}(\rho,0)=h
\label{init cond}%
\end{equation}
and
\begin{equation}
S(\rho,0)=0\quad\text{and}\quad S^{\prime}(\rho,0)=1, \label{init cond2}%
\end{equation}
respectively. Denote additionally for $\lambda=\rho^{2}$
\begin{equation}
\Delta(\lambda):=\varphi^{\prime}(\rho,\pi)+H\varphi(\rho,\pi)\quad
\text{and}\quad\Delta^{0}(\lambda):=S^{\prime}(\rho,\pi)+HS(\rho,\pi).
\label{CharFuns}%
\end{equation}

Obviously, for all $\rho\in\mathbb{C}$ the function $\varphi(\rho,x)$ fulfills
the first boundary condition, $\varphi^{\prime}(\rho,0)-h\varphi(\rho,0)=0$,
and thus, the spectrum of problem \eqref{SL equation}, \eqref{bc1} is the
sequence of numbers $\left\{  \lambda_{n}=\rho_{n}^{2}\right\}  _{n=0}%
^{\infty}$ such that
\[
\Delta(\lambda_{n})=0.
\]

It is easy to see that
\begin{equation}
\label{Phi via M}\Phi(\rho,x) = S(\rho, x) + M(\lambda)\varphi(\rho,x)
\end{equation}
and
\begin{equation}
\label{M fraction}M(\lambda) = -\frac{\Delta^{0}(\lambda)}{\Delta(\lambda)}.
\end{equation}
Moreover, the Weyl function is meromorphic with simple poles at $\lambda=
\lambda_{n}$, $n\ge0$.

\begin{remark}
There is another common definition of the Weyl-Titchmarsh $m$-function
\cite{Simon1999}, \cite{GS2000}, \cite{Horvath2005}. Let $v(\rho,x)$ be the
solution of \eqref{SL equation} satisfying
\begin{equation}
\label{cond for m}v(\rho, \pi) = 1\quad\text{and}\quad v^{\prime}(\rho,\pi) =
-H.
\end{equation}
Then the Weyl-Titchmarsh $m$-function is defined as
\begin{equation}
\label{m function}m(\rho^{2}) = \frac{v^{\prime}(\rho,0)}{v(\rho,0)}.
\end{equation}

One can verify that
\begin{equation}
m(\rho^{2})=h-\frac{\Delta(\rho^{2})}{\Delta^{0}(\rho^{2})}=h+\frac{1}%
{M(\rho^{2})}, \label{m and M}%
\end{equation}
that is, the functions $m$ and $M$ can be easily transformed one into another.
\end{remark}

Denote additionally
\begin{equation}
\label{alpha n}\alpha_{n}:=\int_{0}^{\pi}\varphi^{2}(\rho_{n},x)\,dx.
\end{equation}
The set $\left\{  \alpha_{n}\right\}  _{n=0}^{\infty}$ is referred to as the
sequence of norming constants of problem \eqref{SL equation}, \eqref{bc1}.

\begin{theorem}
[{see \cite[Theorem 1.2.6]{Yurko2007}}]The following representation holds
\begin{equation}
\label{M via spectral data}M(\lambda) = \sum_{n=0}^{\infty}\frac{1}{\alpha_{n}
(\lambda-\lambda_{n})}.
\end{equation}

\end{theorem}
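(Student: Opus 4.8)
The plan is to obtain \eqref{M via spectral data} as the Mittag--Leffler (partial fraction) expansion of the meromorphic function $M$. Since it is already known that $M=-\Delta^{0}/\Delta$ is meromorphic with only simple poles at the points $\lambda_{n}$, the expansion will have the form $M(\lambda)=\sum_{n}\frac{r_{n}}{\lambda-\lambda_{n}}$, where $r_{n}=\operatorname{Res}_{\lambda=\lambda_{n}}M$, provided (i) each residue $r_{n}$ is computed to be $1/\alpha_{n}$, and (ii) a decay estimate for $M$ on a suitable sequence of expanding contours justifies that no entire term is added and that the series converges. I would carry out (i) by direct residue computation and (ii) by a standard contour-integral argument.

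For the residues, I would first note that $\Delta$ and $\Delta^{0}$ are entire in $\lambda$ (as $\varphi(\rho,x)$ and $S(\rho,x)$ are entire in $\lambda=\rho^{2}$), so at a simple zero $\lambda_{n}$ of $\Delta$ one has $r_{n}=-\Delta^{0}(\lambda_{n})/\dot{\Delta}(\lambda_{n})$, where the dot denotes $d/d\lambda$. Two ingredients then give $r_{n}=1/\alpha_{n}$. The first is the Wronskian identity $W[\varphi,S]=\varphi S'-\varphi' S\equiv 1$ (constant since both solve \eqref{SL equation}, with value $1$ read off at $x=0$ from \eqref{init cond}, \eqref{init cond2}). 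Evaluating it at $x=\pi$, $\lambda=\lambda_{n}$ and using $\varphi'(\rho_{n},\pi)=-H\varphi(\rho_{n},\pi)$ (which is exactly $\Delta(\lambda_{n})=0$) yields $\varphi(\rho_{n},\pi)\,\Delta^{0}(\lambda_{n})=1$, hence $\Delta^{0}(\lambda_{n})=1/\varphi(\rho_{n},\pi)$. The second is the $\lambda$-differentiated equation: writing $\dot\varphi=\partial_{\lambda}\varphi$ and combining $-\varphi''+(q-\lambda)\varphi=0$ with $-\dot\varphi''+(q-\lambda)\dot\varphi=\varphi$ gives
\begin{equation}
\frac{d}{dx}\bigl(\dot\varphi'\,\varphi-\varphi'\,\dot\varphi\bigr)=-\varphi^{2}. \nonumber
\end{equation}
Integrating over $(0,\pi)$ and using that the boundary term at $x=0$ vanishes (since $\varphi(\rho,0)=1$, $\varphi'(\rho,0)=h$ are $\lambda$-independent, so $\dot\varphi(\rho,0)=\dot\varphi'(\rho,0)=0$), then inserting $\varphi'(\rho_{n},\pi)=-H\varphi(\rho_{n},\pi)$ and recalling \eqref{alpha n}, I obtain $\varphi(\rho_{n},\pi)\,\dot{\Delta}(\lambda_{n})=-\alpha_{n}$, i.e. $\dot{\Delta}(\lambda_{n})=-\alpha_{n}/\varphi(\rho_{n},\pi)$. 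Dividing gives $r_{n}=-\Delta^{0}(\lambda_{n})/\dot{\Delta}(\lambda_{n})=1/\alpha_{n}$.

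For convergence I would use the classical large-$\rho$ asymptotics $\Delta(\lambda)=-\rho\sin\rho\pi+O\!\left(e^{|\operatorname{Im}\rho|\pi}\right)$ and $\Delta^{0}(\lambda)=\cos\rho\pi+O\!\left(\rho^{-1}e^{|\operatorname{Im}\rho|\pi}\right)$, which show that $M(\lambda)=O\!\left(|\lambda|^{-1/2}\right)$ on a sequence of circles $\Gamma_{N}\colon|\lambda|=R_{N}$ with radii chosen so that $\Gamma_{N}$ stays uniformly bounded away from the poles $\lambda_{n}$ (so that $|\Delta|$ admits a uniform lower bound there). Then for fixed $\lambda$ the residue theorem applied to $\mu\mapsto M(\mu)/(\mu-\lambda)$ gives $\frac{1}{2\pi i}\oint_{\Gamma_{N}}\frac{M(\mu)}{\mu-\lambda}\,d\mu=M(\lambda)-\sum_{\lambda_{n}\ \text{inside}}\frac{1}{\alpha_{n}(\lambda-\lambda_{n})}$, and the decay estimate forces the left-hand side to tend to $0$ as $N\to\infty$, producing \eqref{M via spectral data}. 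The main obstacle is not the algebra of the residue but part (ii): securing the correct choice of contours $\Gamma_{N}$ together with the uniform lower bound on $|\Delta|$ needed to make the contour integral vanish in the limit, which is where the asymptotic analysis must be handled carefully.
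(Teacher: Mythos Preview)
The paper does not give its own proof of this theorem: it is stated as a known result with a reference to \cite[Theorem 1.2.6]{Yurko2007} and is used as background input. Your sketch is the standard textbook argument (essentially the one in Yurko's book): compute the residues of $M=-\Delta^{0}/\Delta$ via the Wronskian identity and the Lagrange-type formula obtained by differentiating the equation in $\lambda$, and then justify the Mittag--Leffler expansion by a contour-integral argument using the large-$\rho$ asymptotics of $\Delta$ and $\Delta^{0}$. Both the residue calculation ($\varphi(\rho_{n},\pi)\Delta^{0}(\lambda_{n})=1$ and $\varphi(\rho_{n},\pi)\dot\Delta(\lambda_{n})=-\alpha_{n}$, hence $r_{n}=1/\alpha_{n}$) and the outline of the contour step are correct; the only part needing care, as you already flag, is the choice of radii $R_{N}$ (e.g.\ $R_{N}=(N+\tfrac12)^{2}$) so that $|\sin\rho\pi|$ is bounded below on $\Gamma_{N}$, guaranteeing the required lower bound on $|\Delta|$.
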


Let us formulate the inverse Sturm-Liouville problem.

\begin{problem}
[Recovery of a Sturm-Liouville problem from its Weyl function]%
\label{Problem Weyl} Given the Weyl function $M(\lambda)$, find a real valued
$q\in L_{2}(0,\pi)$, and the constants $h$, $H\in\mathbb{R}$, such that
$M(\lambda)$ be the Weyl function of problem \eqref{SL equation}, \eqref{bc1}.
\end{problem}

We refer the reader to \cite[Theorem 1.2.7]{Yurko2007} for the result on the
unique solvability of Problem \ref{Problem Weyl}.

Having the Weyl function known completely, one can recover the sequence
$\{\lambda_{n}\}_{n=0}^{\infty}$ as the sequence of its poles, and the
sequence $\{\alpha_{n}\}_{n=0}^{\infty}$ can be immediately obtained from the
corresponding residuals, the formula (1.2.14) from \cite{Yurko2007} states
that
\begin{equation}
\alpha_{n}=\frac{1}{\operatorname{Res}_{\lambda=\lambda_{n}}M(\lambda)}%
=-\frac{1}{\Delta^{0}(\lambda_{n})}\cdot\left.  \frac{d}{d\lambda}%
\Delta(\lambda)\right\vert _{\lambda=\lambda_{n}}. \label{alphan from M}%
\end{equation}
Hence reducing the inverse problem to recovering the Sturm-Liouville problem
by its spectral density function. Or, by taking the sequences of poles and
zeros of the Weyl function one can reduce Problem \ref{Problem Weyl} to
recovering the Sturm-Liouville problem by two spectra, see \cite[Remark
1.2.3]{Yurko2007}.

However the situation is more complicated if one looks for a practical method
of recovering $q$, $h$ and $H$ from the Weyl function given only on a
countable (even worse, finite) set of points from some (small) interval. The
direct reduction to the inverse problems considered, in particular, in
\cite{KT2020}, is not possible. Let us formulate the corresponding problem and
state some sufficient results for its unique solvability.

\begin{problem}
[Recovery of a Sturm-Liouville problem from its Weyl function given on a
countable set of points]\label{Problem Weyl_countable} Given the values
$M_{n}=M(z_{n})$, $M_{n}\in\mathbb{C}\cup\{\infty\}$, of the Weyl function
$M(\lambda)$ on a set of points $\{z_{n}\}_{n=0}^{\infty}$, find a real valued
$q\in L_{2}(0,\pi)$, and the constants $h$, $H\in\mathbb{R}$, such that the
Weyl function of problem \eqref{SL equation}, \eqref{bc1} takes the values
$M_{n}$ at the points $z_{n}$, $n\geq0$.
\end{problem}

It is easy to see that Problem \ref{Problem Weyl_countable} is not always
uniquely solvable. The following result shows that choosing the set of points
$z_{n}$ \textquotedblleft sufficiently dense\textquotedblright\ can guarantee
the unique solvability.

\begin{theorem}
[{\cite[Theorem 2.1]{GRS1997}}]\label{Thm unique solvability} Let $a_{+} =
\max(a,0)$ for $a\in\mathbb{R}$. Let $\{z_{n}\}_{n=0}^{\infty}$ be a sequence
of distinct positive real numbers satisfying
\begin{equation}
\label{Condition on lambda n}\sum_{n=1}^{\infty}\frac{\left(  z_{n} - \frac14
n^{2}\right)  _{+}}{n^{2}} <\infty.
\end{equation}
Let $m_{1}$ and $m_{2}$ be Weyl-Titchmarsh $m$-functions for two
Sturm-Liouville problems having the potentials $q_{1}$ and $q_{2}$ and the
constants $H_{1}$ and $H_{2}$, correspondingly. Suppose that $m_{1}(z_{n}) =
m_{2}(z_{n})$ for all $n\in\mathbb{N}_{0}$. Then $m_{1} = m_{2}$ (and hence
$q_{1}=q_{2}$ a.e.\ on $[0,\pi]$ and $H_{1}=H_{2}$).
\end{theorem}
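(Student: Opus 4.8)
The plan is to encode the interpolation hypothesis as the vanishing of a single entire function at the points $z_{n}$, to pin down the exact growth of that function, and then to run a borderline density (uniqueness set) argument in which the summability condition \eqref{Condition on lambda n} plays the decisive role.

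\textbf{Reduction to one entire function.} First I would use the representation \eqref{m function} to write $m_{i}(\rho^{2})=b_{i}(\lambda)/a_{i}(\lambda)$, where $a_{i}(\lambda):=v_{i}(\rho,0)$ and $b_{i}(\lambda):=v_{i}'(\rho,0)$ and $v_{i}$ solves \eqref{SL equation} with potential $q_{i}$ and $v_{i}(\rho,\pi)=1$, $v_{i}'(\rho,\pi)=-H_{i}$. Both $a_{i}$ and $b_{i}$ are entire in $\lambda=\rho^{2}$ of order $1/2$ and of exponential type $\pi$ in $\rho$, and the transmutation representation gives $a_{i}(\lambda)=\cos\pi\rho+o\!\left(e^{\pi|\operatorname{Im}\rho|}\right)$ and $b_{i}(\lambda)=\rho\sin\pi\rho+o\!\left(\rho\,e^{\pi|\operatorname{Im}\rho|}\right)$. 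Setting
\[
G(\lambda):=b_{1}(\lambda)a_{2}(\lambda)-a_{1}(\lambda)b_{2}(\lambda),
\]
the equality $m_{1}=m_{2}$ is equivalent to $G\equiv0$, and the hypothesis $m_{1}(z_{n})=m_{2}(z_{n})$ yields $G(z_{n})=0$ for every $n$ (points $z_{n}$ that are poles of some $m_{i}$ cause no trouble, since the Wronskian-type combination $G$ still vanishes there). Thus $G$ is entire of order $1/2$ with $G(z_{n})=0$ for all $n\in\mathbb{N}_{0}$.

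\textbf{Growth of $G$.} Next I would determine the type of $G$. A priori it is $2\pi$ in $\rho$, but the leading terms of the two products coincide, $\rho\sin\pi\rho\cos\pi\rho=\tfrac{\rho}{2}\sin2\pi\rho$, and cancel in the difference; hence the full-type oscillatory part drops out and $G(\rho^{2})=o\!\left(\rho\,e^{2\pi|\operatorname{Im}\rho|}\right)$, so that $G$ is sublinear, $G(\rho^{2})=o(\rho)$, along the real $\rho$-axis. On the ray $\rho=it$, $t\to+\infty$ (that is, $\lambda\to-\infty$), the universal high-energy asymptotics give $m_{1}(\lambda)-m_{2}(\lambda)=O(1/t)$, while $a_{1}(\lambda)a_{2}(\lambda)\sim\tfrac14e^{2\pi t}$, so $G$ matches the growth $e^{2\pi t}/t$ of the canonical product built from the $z_{n}$.

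\textbf{Density argument.} Put $\rho_{n}:=\sqrt{z_{n}}$. Since $z_{n}-\tfrac14n^{2}=(\rho_{n}-\tfrac{n}{2})(\rho_{n}+\tfrac{n}{2})$, condition \eqref{Condition on lambda n} is equivalent to $\sum_{n}(\rho_{n}-\tfrac{n}{2})_{+}/n<\infty$, which forces the zeros $\pm\rho_{n}$ of $G$ to have real-axis density at least the critical value $2=2\pi/\pi$ attached to exponential type $2\pi$, with the deficiency controlled in the averaged (Levinson) sense. I would then compare $G$ with the canonical product $P(\lambda):=\prod_{n}(1-\lambda/z_{n})$, entire of order $1/2$ and type $2\pi$ in $\rho$: the quotient $W:=G/P$ is entire of order $\le1/2$ and minimal type, the estimates above make $W$ bounded on the negative real axis, and a Phragm\'{e}n--Lindel\"{o}f argument for a function of order $1/2$ of minimal type forces $W$ to be constant; matching the indicators of $G$ and $P$ then forces that constant to be $0$. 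Hence $G\equiv0$, i.e. $m_{1}=m_{2}$, and $q_{1}=q_{2}$ a.e.\ and $H_{1}=H_{2}$ follow from Marchenko's uniqueness theorem \cite{Marchenko52}.

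\textbf{Main obstacle.} The difficulty lies entirely in the borderline nature of the density step: the data supply zeros of $G$ at \emph{exactly} the critical density dictated by its type $2\pi$, so the naive zero-counting inequality is an equality and cannot by itself force $G\equiv0$. The role of \eqref{Condition on lambda n} is precisely to serve as a Levinson-type gap condition guaranteeing that $P$ attains the full type $2\pi$ in every direction (rather than a smaller type, as for sparser $z_{n}$), so that no type is lost in forming $W=G/P$ and the quotient is genuinely of minimal type. Turning this indicator comparison into the rigorous conclusion $W\equiv0$ — equivalently, proving that a sequence satisfying \eqref{Condition on lambda n} is a uniqueness set for the relevant Paley--Wiener class — is the technical heart of the argument.
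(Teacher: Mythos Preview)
The paper does not give its own proof of this theorem; it is quoted verbatim from \cite{GRS1997} and used only as an input (the proof of Theorem~3.1 even appeals to the \emph{argument} of \cite{GRS1997} rather than to the statement, precisely because the function $\tilde M$ arising there is not known a priori to be a Weyl function). So there is no ``paper's own proof'' to compare against; the relevant benchmark is the proof in \cite{GRS1997}.

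Your outline matches that proof in architecture: one forms the entire function $G(\lambda)=b_{1}a_{2}-a_{1}b_{2}$, observes that the leading $\rho\sin\pi\rho\cos\pi\rho$ terms cancel so that $G$ is of order $1/2$ and type at most $2\pi$ in $\rho$ with a subleading bound on the real axis, and then shows that a sequence $\{z_{n}\}$ satisfying \eqref{Condition on lambda n} is a uniqueness set for this class. Your diagnosis of the ``main obstacle'' is exactly right: the zeros sit at the critical density for type $2\pi$, so naive zero counting is inconclusive, and \eqref{Condition on lambda n} is precisely the Levinson-type averaged gap condition that pushes the canonical product $P$ up to full type $2\pi$, forcing $G/P$ to minimal type. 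In \cite{GRS1997} this last step is carried out by a direct growth estimate on $P$ along suitable rays (using that $\sum (\rho_{n}-n/2)_{+}/n<\infty$ controls the indicator of $P$) rather than by citing a black-box Levinson theorem, but the content is the same. Your sketch is correct; what remains is exactly the computation you flag as the technical heart, and there is no shortcut around it.
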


See also \cite[Theorem 1.5]{Horvath2005} for the necessary and sufficient result.

\subsection{Some inverse problems which can be reduced to Problem
\ref{Problem Weyl_countable}}

\label{Subsect Example Problems} Below we briefly present some inverse
problems, omitting any additional details like the conditions on a spectra
ensuring the uniqueness of the solution. They can be consulted in the provided references.

\subsubsection{Two spectra}

\label{Subsubsect Two spectrum} Consider the second set of boundary
conditions
\begin{equation}
y(0)=0,\quad y^{\prime}(\pi)+Hy(\pi)=0. \label{bc2}%
\end{equation}
Let us denote the spectrum of problem (\ref{SL equation}), (\ref{bc2}) as
$\left\{  \nu_{n}=\mu_{n}^{2}\right\}  _{n=0}^{\infty}$.

The inverse problem consists in recovering $q$, $h$ and $H$ from given two
spectra, $\{\lambda_{n}\}_{n=0}^{\infty}$ and $\{\nu_{n}\}_{n=0}^{\infty}$.
Note that this problem coincides with the one considered in \cite{KT2020} if
one changes $x$ by $\pi-x$ and $h\leftrightarrow H$. Now the equalities
\[
M(\lambda_{n})=\infty\quad\text{and}\quad M(\nu_{n})=0,
\]
can be easily checked and give the reduction of the two spectra inverse
problem to Problem \ref{Problem Weyl_countable}.

\subsubsection{Eigenvalues depending on a variable boundary parameter}

\label{SubSubsectHn} There is a class of inverse problems where it is asked to
recover the potential from given parts of three spectra \cite{GRS1997}, four
spectra \cite{Pivovarchik}, $N$ spectra \cite{Horvath2001}. All these problems
can be regarded as special cases of the following inverse problem. Suppose the
sequences of numbers $\{\tilde{\lambda}_{n}\}_{n=0}^{\infty}$ and
$\{h_{n}\}_{n=0}^{\infty}$ are given and such that $\tilde{\lambda}_{n}$ is an
eigenvalue (does not refer to $n$-th indexed eigenvalue) of the problem
\eqref{SL equation} with the boundary conditions
\begin{equation}
y^{\prime}(0)-h_{n}y(0)=0,\quad y^{\prime}(\pi)+Hy(\pi)=0. \label{bc variable}%
\end{equation}
The problem is to recover the potential $q$. We refer the reader to
\cite[\S 3.10.5 and \S 3.15]{Chadan et al 1997} for some discussion and
numerical results.

The reduction to Problem \ref{Problem Weyl_countable} follows immediately when
one observes that \eqref{bc variable} leads to
\[
m(\tilde\lambda_{n}) = h_{n}.
\]

\subsubsection{Partially known potential}

\label{SubsubsectPartial} The Hochstadt-Lieberman inverse problem from
\cite{HL1978} reads as follows: suppose we are given the spectrum of the
problem \eqref{SL equation}, \eqref{bc1} and the potential $q$ is known on the
segment $[0,\pi/2]$. Recover $q$ on the whole segment $[0,\pi]$. See also
\cite{GS2000-2}, \cite{GRS1997} for the case when the potential is known on a
different portion of the segment. If the potential is known on less than half
of the segment, more than one spectrum is necessary to recover the potential
uniquely. For that reason the general formulation of the problem is as
follows. Suppose $a\in(0,\pi)$ and the potential $q$ is known on $[0,a]$.
Moreover, two sequences of numbers are given as in Subsubsection
\ref{SubSubsectHn}. Recover $q$ on the whole $[0,\pi]$.

Let $m(x,\rho^{2})$ define the $m$-function on $[x,\pi]$ with the same
boundary condition \eqref{cond for m}, i.e.,
\[
m(x,\rho^{2})=\frac{v^{\prime}(\rho,x)}{v(\rho,x)}.
\]
Consider an eigenvalue $\tilde{\lambda}_{n}=\rho_{n}^{2}$ and the
corresponding boundary constant $h_{n}$. Then
\[
m(0,\tilde{\lambda}_{n})=m(\tilde{\lambda}_{n})=h_{n},
\]
hence we may consider the following Cauchy problem for equation
\eqref{SL equation}
\[
v(\rho_{n},0)=1,\quad v^{\prime}(\rho_{n},0)=h_{n}.
\]
Solving it on $[0,a]$ (in practice it can be done efficiently for a large set
of $\tilde{\lambda}_{n}$ using the method developed in \cite{KNT}), we obtain
values $v(\rho_{n},a)$ and $v^{\prime}(\rho_{n},a)$, hence, $m(a,\tilde
{\lambda}_{n})$. And by rescaling we reduce the problem of a partially known
potential to Problem \ref{Problem Weyl_countable}.

\subsubsection{Problems with analytic functions in the boundary condition}

The following spectral parameter dependent boundary condition was considered
in \cite{Bondarenko2020}, \cite{Bondarenko2020-2}, \cite{YBX2020}
\begin{equation}
f_{1}(\lambda)y^{\prime}(0)+f_{2}(\lambda)y(0)=0, \label{bc analytic}%
\end{equation}
where $f_{1}$ and $f_{2}$ are entire functions, not vanishing simultaneously.
The second boundary condition is the same as in \eqref{bc1}. The functions
$f_{1}$ and $f_{2}$ are supposed to be known, and an inverse problem consists
in recovering the potential $q$ and the constant $H$ by the given parameter
$\omega_{2}$ (see \eqref{omega}), spectra $\lambda_{n}$ and corresponding
multiplicities $m_{n}$. See also \cite{Shkalikov1986} for a general spectral
theory of such problems.

Supposing for simplicity that all the eigenvalues are simple, i.e., $m_{n}=1$
for all $n$, the problem immediately reduces to Problem
\ref{Problem Weyl_countable}, since
\[
m(\lambda_{n}) = -\frac{f_{2}(\lambda_{n})}{f_{1}(\lambda_{n})}.
\]

Many inverse problems can be reduced to a problem having boundary condition
\eqref{bc analytic}. In particular, the Hochstadt-Lieberman problem, the
inverse transmission eigenvalue problem, partial inverse problems for quantum
graphs. See \cite{YBX2020} and references therein for more details.

\subsection{Some practical applications}

Inverse Sturm-Liouville problems arise in numerous applied field. For some
examples in vibration models we refer to the book  \cite{GladwellBook}, for
some biomedical engineering applications to the review \cite{Gou Chen 2015},
while here we add two more examples from different areas of physics.

In studying quantum physical properties of quantum dot nanostructures the
problem of recovering symmetric potentials from a finite number of
experimentally obtained eigenvalues is of considerable importance (see, e.g.,
\cite{Khmelnytskaya et al 2010}). Mathematically this means that the potential
$q$ in (\ref{SL equation}) is symmetric: $q(x)=q(\pi-x)$, and a finite part of
one spectrum is given. For simplicity let us suppose that it corresponds to
the Dirichlet conditions. Then, as it is well known (see \cite[p. 103]{Chadan
et al 1997}), this problem reduces to the two spectra problem on the interval
$(0,\pi/2)$. Indeed, for the eigenfunctions we have $y_{n}(x)=y_{n}(\pi-x)$ if
$n$ is odd and $y_{n}(x)=-y_{n}(\pi-x)$ if $n$ is even. Thus, $y_{n}(0)=0$,
$y_{n}(\pi/2)=0$ if $n$ is even and $y_{n}(0)=0$, $y_{n}^{\prime}(\pi/2)=0$ if
$n$ is\ odd. This means that the original spectrum is split into two parts --
the even-numbered eigenvalues forming the Dirichlet-Dirichlet eigenvalues on
$(0,\pi/2)$ and the odd-numbered ones, the Dirichlet-Neumann eigenvalues on
$(0,\pi/2)$. Thus the problem is reduced to the two spectra inverse problem,
and symmetry of $q$ allows one to compute it over the entire interval
$(0,\pi)$.

In order to proceed with the second example, following \cite{Gladwell96}, we
notice that besides the standard form (\ref{SL equation}) of the
Sturm-Liouville equation, which is often the most convenient for analysis, two
other forms
\begin{equation}
\left(  a^{2}(x)u^{\prime}(x)\right)  ^{\prime}+\lambda a^{2}%
(x)u(x)=0\label{rod}%
\end{equation}
and
\begin{equation}
u^{\prime\prime}(x)+\lambda r(x)u(x)=0\label{string}%
\end{equation}
frequently arise in applications. When the functions $a$, $r$, $q$ are
sufficiently smooth, each equation may be transformed into any of the other
two. Equation (\ref{rod}) governs the modes of vibration of a thin straight
rod in longitudinal or torsional vibration. It is often called the
Sturm-Liouville equation in impedance form. Equation (\ref{string}) models the
transverse vibration of a taut string with mass density $r(x)$.

Let us consider the problem of recovering the cross-sectional form $F(x)$ of a
rod with constant density $r$ and Young's modulus $E$ (see \cite[p.72]%
{Vatulyan 2007}). The differential equation and the boundary conditions have
the form%
\begin{gather}
\left(  EF(x)u^{\prime}(x)\right)  ^{\prime}+rF(x)\omega^{2}%
u(x)=0,\label{rodF}\\
u^{\prime}(0)=-\frac{p}{EF(0)},\qquad u^{\prime}(\pi)=0,\label{bcrod}%
\end{gather}
where $p$ is a given constant. The values of $F$ at the end points $F(0)$ and
$F(\pi)$ are supposed to be given, and the function $F(x)$, $x\in(0,\pi)$
needs to be recovered from the additional information on the solution:%
\begin{equation}
u(\omega,0)=\widetilde{f}(\omega),\qquad\omega\in\left[  \omega_{1},\omega
_{2}\right]  .\label{addinfo}%
\end{equation}
Note that for simplicity we substituted the Dirichlet condition at $\pi$ (in
\cite[p.72]{Vatulyan 2007}) by the Neumann condition. Clearly, the problem
with the Dirichlet condition can be considered similarly.

First of all, equation (\ref{rodF}) can be written in the form (\ref{rod})
with $a^{2}=F$ and $\lambda=r\omega^{2}/E$. Next, for the function
$y(x)=u(x)/a(x)$ the problem (\ref{rodF}), (\ref{bcrod}), (\ref{addinfo})
takes the form%
\begin{gather}
-y^{\prime\prime}+q(x)y=\rho^{2}y,\qquad x\in(0,\pi),\nonumber\\
y^{\prime}(\rho,0)-hy(\rho,0)=c,\qquad y^{\prime}(\rho,\pi)+Hy(\rho
,\pi)=0,\label{bcond}\\
y(\rho,0)=f(\rho),\qquad\rho\in\left[  \rho_{1},\rho_{2}\right] ,\nonumber
\end{gather}
where $h=-a^{\prime}(0)/a(0)$, $c=-p/(EF^{3/2}(0))$, $H=a^{\prime}(\pi
)/a(\pi)$ and $f(\rho)=\widetilde{f}(\omega)/a(0)$. The boundary conditions
(\ref{bcond}) indicate that $y(\rho,x)=c\Phi(\rho,x)$. Thus, the original
problem reduces to the problem of recovering the potential $q$ and the
constants $h$, $H$ from the Weyl function
\[
M(\lambda)=\frac{f(\rho)}{c}%
\]
given on the segment $\left[  \rho_{1},\rho_{2}\right]  $.

\subsection{Integral representation of solutions via the transmutation
operator}

The solutions $\varphi(\rho,x)$ and $S(\rho,x)$ admit the integral
representations (see, e.g., \cite{Chadan}, \cite{LevitanInverse},
\cite{Marchenko52}, \cite[Chapter 1]{Marchenko}, \cite{SitnikShishkina
Elsevier}, \cite{CKT})
\begin{align}
\varphi(\rho, x)  &  = \cos\rho x + \int_{-x}^{x} K(x,t) \cos\rho t\,
dt,\label{phi via T}\\
S(\rho, x)  &  = \frac{\sin\rho x}{\rho} + \int_{-x}^{x} K(x,t) \frac{\sin\rho
t}{\rho} \,dt, \label{S via T}%
\end{align}
where the integral kernel $K$ is a continuous function of both arguments in
the domain $0\leq|t|\leq x\leq\pi$ and satisfies the equalities
\begin{equation}
K(x,x)=\frac h2+\frac{1}{2}\int_{0}^{x}q(t)\,dt\quad\text{and}\quad
K(x,-x)=\frac h2. \label{Kxx}%
\end{equation}
It is of crucial importance that $K(x,t)$ is independent of $\rho$.

The following result from \cite{KNT} will be used.

\begin{theorem}
[\cite{KNT}]\label{Theorem K series representation} The integral transmutation
kernel $K(x,t)$ and its derivative $K_{1}(x,t):=\frac{\partial}{\partial
x}K(x,t)$ admit the following Fourier-Legendre series representations
\begin{align}
K(x,t)  &  =\sum_{n=0}^{\infty} \frac{\beta_{n}(x)}{x}P_{n}\left(  \frac{t}%
{x}\right)  ,\label{K Fourier series}\\
K_{1}(x,t)  &  =\sum_{n=0}^{\infty}\frac{\gamma_{n}(x)}{x}P_{n}\left(
\frac{t}{x}\right)  ,\quad0<|t|\leq x\leq\pi, \label{K1 via gammas}%
\end{align}
where $P_{k}$ stands for the Legendre polynomial of order $k$.

For every $x\in(0,\pi]$ the series converge in the norm of $L_{2}(-x,x)$. The
first coefficients $\beta_{0}(x)$ and $\gamma_{0}(x)$ have the form
\begin{equation}
\beta_{0}(x)=\frac{\varphi(0,x)-1}2,\quad\gamma_{0}(x) =\beta_{0}^{\prime
}(x)-\frac{h}{2}-\frac{1}{4}\int_{0}^{x}q(s)\,ds, \label{beta0}%
\end{equation}
and the rest of the coefficients can be calculated following a simple
recurrent integration procedure.
\end{theorem}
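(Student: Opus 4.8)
The plan is to treat both expansions as ordinary Fourier--Legendre expansions in the variable $t$ for each fixed $x$, and then to extract the stated leading coefficients by testing against $P_0\equiv1$. For fixed $x\in(0,\pi]$ the rescaled Legendre polynomials $\{P_n(t/x)\}_{n\ge0}$ form a complete orthogonal system in $L_2(-x,x)$, with $\int_{-x}^x P_n(t/x)P_m(t/x)\,dt=\frac{2x}{2n+1}\delta_{nm}$. Since $K(x,\cdot)$ is continuous on $[-x,x]$, hence in $L_2(-x,x)$, and since for $q\in L_2$ the kernel is regular enough in $x$ that $K_1(x,\cdot)\in L_2(-x,x)$ as well, both functions admit $L_2(-x,x)$-convergent expansions. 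This yields \eqref{K Fourier series} and \eqref{K1 via gammas} with
\[
\beta_n(x)=\frac{2n+1}{2}\int_{-x}^x K(x,t)P_n\!\Big(\frac t x\Big)\,dt,\qquad \gamma_n(x)=\frac{2n+1}{2}\int_{-x}^x K_1(x,t)P_n\!\Big(\frac t x\Big)\,dt,
\]
so the convergence assertion is automatic and the content lies in identifying the coefficients.

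Next I would compute $\beta_0$. Because $P_0\equiv1$, $\beta_0(x)=\tfrac12\int_{-x}^x K(x,t)\,dt$. Evaluating the representation \eqref{phi via T} at $\rho=0$ gives $\varphi(0,x)=1+\int_{-x}^x K(x,t)\,dt$, whence $\int_{-x}^x K(x,t)\,dt=\varphi(0,x)-1$ and the first formula in \eqref{beta0} follows immediately.

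For $\gamma_0$ the same test function gives $\gamma_0(x)=\tfrac12\int_{-x}^x K_1(x,t)\,dt$, but I cannot differentiate the $\beta_0$ series term by term, since both the coefficient and the argument $t/x$ depend on $x$. Instead I would differentiate the identity $\int_{-x}^x K(x,t)\,dt=2\beta_0(x)$ using the Leibniz rule for a variable domain,
\[
\frac{d}{dx}\int_{-x}^x K(x,t)\,dt=\int_{-x}^x K_1(x,t)\,dt+K(x,x)+K(x,-x),
\]
and then substitute the boundary values \eqref{Kxx}, namely $K(x,x)+K(x,-x)=h+\tfrac12\int_0^x q(s)\,ds$. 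Solving for $\int_{-x}^x K_1\,dt$ and dividing by two gives $\gamma_0(x)=\beta_0'(x)-\tfrac h2-\tfrac14\int_0^x q(s)\,ds$, which is precisely the second formula in \eqref{beta0}.

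The remaining and genuinely technical step is the recurrence for $\beta_n$, $n\ge1$ (and correspondingly for $\gamma_n$). Here I would exploit the Goursat problem satisfied by the transmutation kernel, $K_{xx}-K_{tt}=q(x)K$ together with the characteristic conditions \eqref{Kxx}. Substituting the ansatz \eqref{K Fourier series} and using the Legendre differential equation $\frac{d}{ds}\big[(1-s^2)P_n'(s)\big]=-n(n+1)P_n(s)$ together with the three-term recurrence and the derivative identities for $P_n$, one reduces the PDE to a triangular system expressing $\beta_n$ through $\beta_{n-1},\dots,\beta_0$ and antiderivatives of $q$; starting from the explicit $\beta_0$ this system is solved by successive integration, which is the ``simple recurrent integration procedure'' asserted in the statement. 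I expect the bookkeeping in converting the $x$- and $t$-derivatives of $P_n(t/x)$ into pure Legendre data, and in checking that the characteristic conditions pin down the constant of integration at each step, to be the main obstacle; the convergence and the two leading formulas are comparatively routine.
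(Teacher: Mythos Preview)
The paper does not prove this theorem; it is quoted from \cite{KNT} and stated without proof. So there is no ``paper's own proof'' to compare against here, and your proposal should be read on its own merits.

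Your derivation of the expansions and of the two leading coefficients is correct and is essentially the argument one finds in \cite{KNT}: for fixed $x$ the rescaled Legendre system is complete and orthogonal in $L_2(-x,x)$, so the $L_2$-convergent expansions \eqref{K Fourier series}--\eqref{K1 via gammas} are immediate once $K(x,\cdot),K_1(x,\cdot)\in L_2(-x,x)$; the formula for $\beta_0$ follows from \eqref{phi via T} at $\rho=0$, and your Leibniz computation for $\gamma_0$ using \eqref{Kxx} is exactly right. One point you should tighten is the sentence ``for $q\in L_2$ the kernel is regular enough in $x$ that $K_1(x,\cdot)\in L_2(-x,x)$'': this is true but not automatic, and in \cite{KNT} it is handled by appealing to the known regularity theory for the Goursat problem for $K$ (see also \cite{Marchenko}); you should cite or prove that fact rather than assert it. Your outline of the recurrence via the wave-type equation $K_{xx}-K_{tt}=qK$ and Legendre identities is the right idea and is how \cite{KNT} proceeds; the bookkeeping you anticipate is indeed the bulk of the work there, but there is no hidden obstacle.
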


\begin{corollary}
\label{Rem q from beta0}Equality \eqref{beta0} shows that the potential $q(x)$
can be recovered from $\beta_{0}(x)$. Indeed,
\begin{equation}
q(x)=\frac{\varphi^{\prime\prime}(0,x)}{\varphi(0,x)}=\frac{2\beta_{0}
^{\prime\prime}(x)}{2\beta_{0}(x)+1}. \label{q from beta0}%
\end{equation}
Moreover, the constant $h$ is also recovered directly from $\beta_{0}(x)$,
since%
\[
h=2\beta_{0}^{\prime}(0).
\]

\end{corollary}

\subsection{Neumann series of Bessel functions representations for solutions
and their derivatives}

The following series representations for the solutions $\varphi(\rho,x)$ and
$S(\rho,x)$ and for their derivatives were obtained in \cite{KNT}.

\begin{theorem}
[\cite{KNT}]\label{Theorem NSBF} The solutions $\varphi(\rho,x)$ and
$S(\rho,x)$ and their derivatives with respect to $x$ admit the following
series representations
\begin{align}
\varphi(\rho,x)  &  =\cos\rho x+2\sum_{n=0}^{\infty}(-1)^{n}\beta
_{2n}(x)j_{2n}(\rho x),\label{phi}\\
S(\rho,x)  &  =\frac{\sin\rho x}{\rho}+\frac{2}{\rho}\sum_{n=0}^{\infty
}(-1)^{n}\beta_{2n+1}(x)j_{2n+1}(\rho x),\label{S}\\
\varphi^{\prime}(\rho,x)  &  =-\rho\sin\rho x+\left(  h+\frac{1}{2}\int
_{0}^{x}q(s)\,ds\right)  \cos\rho x+2\sum_{n=0}^{\infty}(-1)^{n}\gamma
_{2n}(x)j_{2n}(\rho x),\label{phiprime}\\
S^{\prime}(\rho,x)  &  =\cos\rho x+\frac{1}{2\rho}\left(  \int_{0}%
^{x}q(s)\,ds\right)  \sin\rho x+\frac{2}{\rho}\sum_{n=0}^{\infty}%
(-1)^{n}\gamma_{2n+1}(x)j_{2n+1}(\rho x), \label{Sprime}%
\end{align}
where $j_{k}(z)$ stands for the spherical Bessel function of order $k$ (see,
e.g., \cite{AbramowitzStegunSpF}). The coefficients $\beta_{n}(x)$ and
$\gamma_{n}(x)$ are those from Theorem \ref{Theorem K series representation}.
For every $\rho\in\mathbb{C}$ all the series converge pointwise. For every
$x\in\left[  0,\pi\right]  $ the series converge uniformly on any compact set
of the complex plane of the variable $\rho$, and the remainders of their
partial sums admit estimates independent of $\operatorname{Re}\rho$.

Moreover, the representations can be differentiated with respect to $\rho$
resulting in
\begin{align}
\varphi^{\prime}_{\rho}(\rho, x)  &  = - x\sin\rho x + 2\sum_{n=0}^{\infty
}(-1)^{n} \beta_{2n}(x) \left(  \frac{2n}\rho j_{2n}(\rho x) - x j_{2n+1}(\rho
x)\right)  ,\label{phiprime rho}\\%
\begin{split}
\varphi^{\prime\prime}_{x,\rho}(\rho, x)  &  = -\left(  1+ hx+\frac{x}{2}%
\int_{0}^{x}q(s)\,ds\right)  \sin\rho x-\rho x \cos\rho x\\
&  \quad+2\sum_{n=0}^{\infty}(-1)^{n} \gamma_{2n}(x) \left(  \frac{2n}\rho
j_{2n}(\rho x) - x j_{2n+1}(\rho x)\right)  .
\end{split}
\label{phiprimeprime}%
\end{align}

\end{theorem}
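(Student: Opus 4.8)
The plan is to obtain each of \eqref{phi}--\eqref{Sprime} directly from the transmutation representations \eqref{phi via T}, \eqref{S via T} by inserting the Fourier--Legendre expansions of Theorem \ref{Theorem K series representation} and collapsing the resulting integrals with one classical identity relating Legendre polynomials to spherical Bessel functions. The identity I would invoke is
\[
\int_{-1}^{1}P_{n}(u)e^{izu}\,du=2i^{n}j_{n}(z),
\]
whose real and imaginary parts, together with the parity of $P_{n}$, give
\[
\int_{-1}^{1}P_{2n}(u)\cos(zu)\,du=2(-1)^{n}j_{2n}(z),\qquad
\int_{-1}^{1}P_{2n+1}(u)\sin(zu)\,du=2(-1)^{n}j_{2n+1}(z),
\]
while the complementary integrals $\int_{-1}^{1}P_{2n+1}(u)\cos(zu)\,du$ and $\int_{-1}^{1}P_{2n}(u)\sin(zu)\,du$ vanish. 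These relations are exactly what selects the even indices in \eqref{phi}, \eqref{phiprime} and the odd indices in \eqref{S}, \eqref{Sprime}.

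First I would substitute \eqref{K Fourier series} into \eqref{phi via T}. For fixed $\rho$ the function $t\mapsto\cos\rho t$ belongs to $L_{2}(-x,x)$ and the Legendre series converges in $L_{2}(-x,x)$, so the integral may be computed term by term by continuity of the inner product; after the change of variables $u=t/x$ the parity relations annihilate the odd-indexed terms and the even ones reproduce precisely \eqref{phi}. Applying the same computation to \eqref{S via T}, where the kernel is now paired with $\sin\rho t/\rho$, keeps only the odd-indexed terms and yields \eqref{S}.

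For the derivatives in $x$ I would differentiate \eqref{phi via T} and \eqref{S via T} under the integral sign, picking up the boundary contributions from the variable endpoints $\pm x$. Evaluated through \eqref{Kxx}, these boundary terms produce the non-Bessel parts of \eqref{phiprime} and \eqref{Sprime} (the combination $K(x,x)+K(x,-x)=h+\tfrac12\int_{0}^{x}q$ for $\varphi'$ and $K(x,x)-K(x,-x)=\tfrac12\int_{0}^{x}q$ for $S'$), while the interior integral $\int_{-x}^{x}K_{1}(x,t)\cos\rho t\,dt$ is treated exactly as above with \eqref{K1 via gammas} in place of \eqref{K Fourier series}, giving the $\gamma$-series. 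Finally, \eqref{phiprime rho} and \eqref{phiprimeprime} follow by differentiating \eqref{phi} and \eqref{phiprime} termwise in $\rho$ and using $j_{n}'(z)=\tfrac{n}{z}j_{n}(z)-j_{n+1}(z)$, so that $\partial_{\rho}j_{2n}(\rho x)=\tfrac{2n}{\rho}j_{2n}(\rho x)-xj_{2n+1}(\rho x)$, which is exactly the bracketed combination there; the termwise differentiation is licensed by the locally uniform convergence of the series.

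The delicate point, and the step I expect to be the main obstacle, is not this algebra but the convergence claims, above all the assertion that the remainders of the partial sums admit estimates independent of $\operatorname{Re}\rho$. Pointwise convergence for each $\rho$ and uniform convergence on compact $\rho$-sets follow from Bessel's inequality for the coefficients $\{\beta_{n}(x)\}$, $\{\gamma_{n}(x)\}$ combined with Cauchy--Schwarz on the tail. To upgrade this to a bound that does not deteriorate as $\operatorname{Re}\rho\to\infty$ one needs a uniform estimate of $j_{n}(\rho x)$ whose growth is controlled solely through $|\operatorname{Im}\rho|$ and the order $n$; it is the characteristic decay $|j_{n}(z)|\lesssim n^{-1/2}$ along the real axis that renders the Legendre tail summable against the Bessel factors uniformly in the real direction. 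Marrying such a bound with the square-summability of the coefficients is the crux, and it is precisely this estimate, carried out as in \cite{KNT}, that distinguishes the NSBF representation from a merely formal rearrangement.
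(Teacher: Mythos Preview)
Your sketch is correct and follows exactly the approach of the cited source \cite{KNT}: substitute the Fourier--Legendre expansions into the transmutation representations, apply the classical identity $\int_{-1}^{1}P_{n}(u)e^{izu}\,du=2i^{n}j_{n}(z)$ to collapse the integrals, and handle the $x$-derivatives via Leibniz with the boundary values \eqref{Kxx}; you have also correctly identified the uniform-in-$\operatorname{Re}\rho$ remainder estimate as the nontrivial analytic ingredient. Note, however, that the present paper does not itself prove this theorem---it simply quotes the result from \cite{KNT} and refers the reader there---so there is no in-paper proof to compare against beyond confirming that your outline matches the original argument.
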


We refer the reader to \cite{KNT} for the proof and additional details related
to this result. The coefficients $\beta_{n}$ and $\gamma_{n}$ decay when
$n\rightarrow\infty$, and the decay rate is faster for smoother potentials.
Namely, the following result is valid.

\begin{proposition}
[\cite{KrT2018}]\label{Prop coeff decay} Let $q\in W_{\infty}^{p}[0,\pi]$ for
some $p\in\mathbb{N}_{0}$, i.e., the potential $q$ is $p$ times differentiable
with the last derivative being bounded on $[0,\pi]$. Then there exist
constants $c$ and $d$, independent of $N$, such that
\[
|\beta_{N}(x)|\leq\frac{cx^{p+2}}{(N-1)^{p+1/2}}\qquad\text{and}\qquad
|\gamma_{N}(x)|\leq\frac{dx^{p+1}}{(N-1)^{p-1/2}},\qquad N\geq p+1.
\]

\end{proposition}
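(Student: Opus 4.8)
The plan is to recognize $\beta_N(x)$ and $\gamma_N(x)$ as rescaled Fourier--Legendre coefficients and then trade smoothness of the kernel for decay in $N$ via repeated integration by parts. Using orthogonality, $\int_{-1}^1 P_mP_n\,du=\frac{2}{2n+1}\delta_{mn}$, one inverts \eqref{K Fourier series}: after the substitution $t=xu$ (which contributes a factor $x$ from $dt$),
\[
\beta_N(x)=\frac{(2N+1)x}{2}\int_{-1}^1 g(u)P_N(u)\,du,\qquad g(u):=K(x,xu),
\]
and analogously, from \eqref{K1 via gammas}, $\gamma_N(x)=\frac{(2N+1)x}{2}\int_{-1}^1 h(u)P_N(u)\,du$ with $h(u):=K_1(x,xu)$. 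Thus everything reduces to estimating a single Legendre coefficient of a smooth function.

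I would drive the decay with the identity $(2n+1)P_n=P_{n+1}'-P_{n-1}'$. Since $P_{n+1}(\pm1)-P_{n-1}(\pm1)=0$, integration by parts leaves no boundary term and gives
\[
\int_{-1}^1 f^{(j)}(u)P_m(u)\,du=\frac{-1}{2m+1}\int_{-1}^1 f^{(j+1)}(u)\bigl(P_{m+1}(u)-P_{m-1}(u)\bigr)\,du .
\]
Applying this $p+1$ times to $g$ (legitimate once $N\ge p+1$, so that the shifted indices stay nonnegative) writes $\int_{-1}^1 gP_N\,du$ as a bounded number of integrals $\int_{-1}^1 g^{(p+1)}P_m\,du$ with $|m-N|\le p+1$, divided by a product of $p+1$ factors each comparable to $2N$. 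Each surviving integral is at most $\|g^{(p+1)}\|_{L^\infty}\|P_m\|_{L^1(-1,1)}$, and $\|P_m\|_{L^1}\le\sqrt2\,\|P_m\|_{L^2}=2/\sqrt{2m+1}$ supplies the extra half power. Finally the chain rule gives $g^{(p+1)}(u)=x^{p+1}(\partial_t^{p+1}K)(x,xu)$. Multiplying the prefactor $\frac{(2N+1)x}{2}\sim Nx$ by $N^{-(p+1)}$ (from the denominators), by $N^{-1/2}$ (from the $L^1$-norm of $P_m$) and by $x^{p+1}$ produces $|\beta_N(x)|\le c\,x^{p+2}(N-1)^{-(p+1/2)}$; the same computation carried out $p$ times for $h=K_1(x,x\cdot)$ yields $|\gamma_N(x)|\le d\,x^{p+1}(N-1)^{-(p-1/2)}$. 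The index shifts, not the magnitude of $N$, are what turn $N$ into $N-1$ in the denominators.

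The single nontrivial ingredient, and the step I expect to be the main obstacle, is the regularity of the transmutation kernel that justifies the $p+1$ integrations by parts and makes the constants finite. Concretely, I must show that $q\in W_\infty^p[0,\pi]$ forces $\partial_t^{p+1}K$ and $\partial_t^{p}K_1=\partial_t^{p}\partial_xK$ to exist and to be bounded on the triangle $\{0\le|t|\le x\le\pi\}$ uniformly in $x$, so that $\sup_x\|\partial_t^{p+1}K(x,\cdot)\|_\infty$ and $\sup_x\|\partial_t^{p}K_1(x,\cdot)\|_\infty$ are finite; these suprema are exactly what the constants $c$ and $d$ absorb, leaving the explicit powers of $x$ coming from the rescaling. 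The gain of one derivative is natural: $K$ solves the Goursat problem $K_{xx}-K_{tt}=q(x)K$ whose characteristic data \eqref{Kxx} contains $\frac12\int_0^x q$, which is one order smoother than $q$, and a hyperbolic a priori estimate (equivalently, the successive-approximation series defining $K$) propagates $W_\infty^{p+1}$ regularity from the characteristics into the interior, while $K_1=K_x$ loses exactly one derivative. Once this regularity is secured, the remainder is the bookkeeping of the powers of $x$ and of $N$ carried out above.
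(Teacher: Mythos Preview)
The paper does not prove this proposition; it is quoted from \cite{KrT2018} without argument, so there is no in-paper proof to compare against. Your outline is the standard route to Legendre-coefficient decay and is essentially correct: invert \eqref{K Fourier series}--\eqref{K1 via gammas} to recognize $\beta_N,\gamma_N$ as (rescaled) Legendre coefficients, iterate the identity $(2n+1)P_n=P_{n+1}'-P_{n-1}'$ to trade $t$-derivatives of the kernel for inverse powers of $N$, and finish with the $L^1$--$L^2$ bound $\|P_m\|_{L^1(-1,1)}\le 2/\sqrt{2m+1}$. The bookkeeping of the powers of $x$ and $N$ you wrote is right, including the count $p+1$ integrations for $K$ versus $p$ for $K_1$.

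You have correctly flagged the only substantive point: the estimate rests on the regularity transfer $q\in W_\infty^p[0,\pi]\Rightarrow \partial_t^{p+1}K,\ \partial_t^{p}K_1\in L_\infty$ on the characteristic triangle. Your heuristic (Goursat data \eqref{Kxx} is one order smoother than $q$, and the wave equation $K_{xx}-K_{tt}=qK$ propagates this) is the right one, but to make the proof self-contained you should either cite the precise statement (it is in Marchenko or Levitan, or in \cite{KrT2018} itself) or carry out the successive-approximation estimate explicitly; the latter also yields the uniformity in $x$ that your constants $c,d$ require. One minor caution: after $k$ integrations by parts you have not a single term but a signed sum of $2^k$ integrals with indices spread over $[N-k,N+k]$; the combinatorial constant this produces is harmless (it depends only on $p$), but it should be absorbed explicitly into $c$ and $d$ rather than left implicit.
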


Let us denote
\begin{equation}
\label{h_n}h_{n} := \gamma_{n}(\pi) + H\beta_{n}(\pi),\qquad n\ge0
\end{equation}
and
\begin{equation}
\omega:=h+H+\frac{1}{2}\int_{0}^{\pi}q(t)\,dt\quad\text{and}\quad\omega
_{2}:=\omega-h = H+\frac12\int_{0}^{\pi}q(t)\,dt. \label{omega}%
\end{equation}

\begin{corollary}
The following representations hold for the characteristic functions $\Delta$
and $\Delta^{0}$
\begin{align}
\Delta(\rho^{2})  &  = -\rho\sin\rho\pi+ \omega\cos\rho\pi+ 2\sum
_{n=0}^{\infty}(-1)^{n}h_{2n}j_{2n}(\rho\pi),\label{Delta NSBF}\\
\Delta^{0}(\rho^{2})  &  = \cos\rho\pi+ \omega_{2} \frac{\sin\rho\pi}{\rho} +
\frac2\rho\sum_{n=0}^{\infty}(-1)^{n}h_{2n+1}j_{2n+1}(\rho\pi),
\label{Delta0 NSBF}%
\end{align}
and for the derivative with respect to $\rho$
\begin{equation}
\label{Delta prime NSBF}\frac{d}{d\rho}\Delta(\rho^{2}) = -(1+\pi\omega
)\sin\rho\pi-\pi\rho\cos\rho\pi+ 2\sum_{n=0}^{\infty}(-1)^{n} h_{2n} \left(
\frac{2n}\rho j_{2n}(\rho\pi) - \pi j_{2n+1}(\rho\pi)\right)  .
\end{equation}

\end{corollary}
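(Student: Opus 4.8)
The plan is to obtain all three identities by direct substitution of $x=\pi$ into the series representations already supplied by Theorem \ref{Theorem NSBF}, followed by a regrouping of terms. Since the definition \eqref{CharFuns} reads $\Delta(\rho^{2})=\varphi'(\rho,\pi)+H\varphi(\rho,\pi)$ and $\Delta^{0}(\rho^{2})=S'(\rho,\pi)+HS(\rho,\pi)$, there is nothing to do but to add the relevant expansions and collect like terms.

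First I would treat $\Delta$. Setting $x=\pi$ in \eqref{phiprime} and in \eqref{phi}, and forming the combination $\varphi'(\rho,\pi)+H\varphi(\rho,\pi)$, the two $\cos\rho\pi$ contributions merge with coefficient $h+\frac12\int_{0}^{\pi}q(s)\,ds+H$, which is exactly $\omega$ by \eqref{omega}; the two even-indexed Bessel series merge term by term, and the bracket $\gamma_{2n}(\pi)+H\beta_{2n}(\pi)$ is precisely $h_{2n}$ by \eqref{h_n}. This produces \eqref{Delta NSBF}. The analogous computation for $\Delta^{0}$ uses \eqref{Sprime} and \eqref{S} at $x=\pi$: here the $\frac{\sin\rho\pi}{\rho}$ terms combine with coefficient $\frac12\int_{0}^{\pi}q(s)\,ds+H=\omega_{2}$, the odd-indexed series combine with coefficient $\gamma_{2n+1}(\pi)+H\beta_{2n+1}(\pi)=h_{2n+1}$, and \eqref{Delta0 NSBF} follows.

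For the derivative I would use that $\varphi'=\partial_{x}\varphi$, so that differentiating $\Delta(\rho^{2})=\varphi'(\rho,\pi)+H\varphi(\rho,\pi)$ in $\rho$ gives $\frac{d}{d\rho}\Delta(\rho^{2})=\varphi''_{x,\rho}(\rho,\pi)+H\varphi'_{\rho}(\rho,\pi)$, and these two quantities are supplied at once by \eqref{phiprimeprime} and \eqref{phiprime rho}. Putting $x=\pi$ and adding, the $\sin\rho\pi$ coefficients combine to $1+\pi\bigl(h+\frac12\int_{0}^{\pi}q(s)\,ds+H\bigr)=1+\pi\omega$, the $\cos\rho\pi$ term is $-\pi\rho\cos\rho\pi$, and the series coefficients again collapse to $h_{2n}$, yielding \eqref{Delta prime NSBF}.

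There is essentially no genuine obstacle here: the content is entirely the arithmetic of matching the constants $h$, $H$ and $\frac12\int_{0}^{\pi}q$ against the definitions of $\omega$, $\omega_{2}$ and of the combination $h_{n}=\gamma_{n}(\pi)+H\beta_{n}(\pi)$. The only point deserving a word of justification is the term-by-term addition of the two Bessel series, which is legitimate because each series in Theorem \ref{Theorem NSBF} converges (uniformly on compact sets of the $\rho$-plane), so finite linear combinations may be formed summand by summand; the same uniform convergence is what licenses the differentiated expansions \eqref{phiprime rho} and \eqref{phiprimeprime} used for the last identity.
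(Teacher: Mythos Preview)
Your argument is correct and is precisely the route the paper intends: the corollary is stated without proof in the paper because it follows immediately from Theorem \ref{Theorem NSBF} by setting $x=\pi$, using the definitions \eqref{CharFuns}, \eqref{h_n}, \eqref{omega}, and adding the corresponding expansions term by term, exactly as you describe.
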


\subsection{The Gelfand-Levitan equation}

Let
\[
G(x,t) := K(x,t) + K(x,-t) = \sum_{n=0}^{\infty} \frac{2\beta_{2n}(x)}%
{x}P_{2n}\left(  \frac{t}{x}\right)
\]
and let
\begin{equation}
F(x,t)=\sum_{n=0}^{\infty}\left(  \frac{\cos\rho_{n}x\cos\rho_{n}t}{\alpha
_{n}}-\frac{\cos nx\cos nt}{\alpha_{n}^{0}}\right)  ,\quad0\leq t,\,x<\pi
\label{F}%
\end{equation}
where
\[
\alpha_{n}^{0}=%
\begin{cases}
\pi/2, & n>0,\\
\pi, & n=0.
\end{cases}
\]

Then the function $G$ is the unique solution of the Gelfand-Levitan equation,
see, e.g., \cite[Theorem 1.3.1]{Yurko2007}
\begin{equation}
G(x,t)+F(x,t)+\int_{0}^{x}F(t,s)G(x,s)\,ds=0,\quad0<t<x<\pi.
\label{Gelfand-Levitan}%
\end{equation}

The series \eqref{F} converges slowly and possesses a jump discontinuity (for
$\omega\ne0$) at $x=t=\pi$. To overcome these difficulties, in
\cite{KKT2019AMC} another representation for the function $F$ was derived.
Namely,
\begin{equation}%
\begin{split}
F(x,t)  &  =\sum_{n=1}^{\infty}\left(  \frac{\cos\rho_{n}x\,\cos\rho_{n}%
t}{\alpha_{n}}-\frac{\cos nx\,\cos nt}{\alpha_{n}^{0}}+\frac{2\omega}{\pi
^{2}n}\Bigl(x\sin nx\,\cos nt+t\sin nt\,\cos nx\Bigr)\right) \\
&  \quad+\frac{\cos\rho_{0} x\,\cos\rho_{0} t}{\alpha_{0}}-\frac{1}{\pi}%
-\frac{\omega}{\pi^{2}}\bigl(\pi\max\{x,t\}-x^{2}-t^{2}\bigr).
\end{split}
\label{F alt}%
\end{equation}

\section{Method of solution of Problem \ref{Problem Weyl_countable}}

\label{Sect3}

\subsection{Recovery of the parameters $\omega$ and $\omega_{2}$ and of the
characteristic functions $\Delta$ and $\Delta^{0}$}

\label{Subsect Recovery hn} Let us rewrite \eqref{M fraction} as
\[
M(\rho^{2})\Delta(\rho^{2}) + \Delta^{0}(\rho^{2}) = 0
\]
and substitute \eqref{Delta NSBF} and \eqref{Delta0 NSBF} into the last
expression. We obtain
\[%
\begin{split}
M(\rho^{2})  &  \left(  -\rho\sin\rho\pi+ \omega\cos\rho\pi+ 2\sum
_{n=0}^{\infty}(-1)^{n}h_{2n}j_{2n}(\rho\pi)\right) \\
&  +\cos\rho\pi+ \omega_{2} \frac{\sin\rho\pi}{\rho} + \frac2\rho\sum
_{n=0}^{\infty}(-1)^{n}h_{2n+1}j_{2n+1}(\rho\pi) = 0,
\end{split}
\]
or
\begin{equation}
\label{Equation for hn}%
\begin{split}
\omega\cdot M(\rho^{2})\cos\rho\pi &  + \omega_{2}\cdot\frac{\sin\rho\pi}%
{\rho} + 2\sum_{n=0}^{\infty} h_{2n} \cdot(-1)^{n}M(\rho^{2})j_{2n}(\rho\pi)
+2\sum_{n=0}^{\infty} h_{2n+1} \cdot\frac{(-1)^{n}j_{2n+1}(\rho\pi)}\rho\\
&  = M(\rho^{2}) \rho\sin\rho\pi- \cos\rho\pi.
\end{split}
\end{equation}
For the case when $M(\rho^{2}) = \infty$, the last equality reduces to
\begin{equation}
\label{Equation for hn_infty}\omega\cos\rho\pi+ 2\sum_{n=0}^{\infty} h_{2n}
\cdot(-1)^{n}j_{2n}(\rho\pi) = \rho\sin\rho\pi.
\end{equation}

Suppose that the Weyl function is known on a countable set of points
$\{z_{k}\}_{k=0}^{\infty}=\{\tilde{\rho}_{k}^{2}\}_{k=0}^{\infty}$, that is,
let the values $M_{k}:=M(z_{k})$, $M_{k}\in\mathbb{R}\cup\{\infty\}$ be given.
Then considering the equality \eqref{Equation for hn} for all $\rho
=\tilde{\rho}_{k}$ we obtain an infinite system of linear algebraic equations
for\ the unknown $\omega$, $\omega_{2}$ and $\{h_{n}\}_{n=0}^{\infty}$.

One can easily see from \eqref{K Fourier series} and \eqref{K1 via gammas}
that the functions $\frac{\sqrt{2\pi} h_{n}}{\sqrt{2n+1}}$ are the Fourier
coefficients of the square integrable function $K_{1}(\pi,t) + HK(\pi,t)$ in
the space $L_{2}(-\pi,\pi)$ with respect to the orthonormal basis
\begin{equation}
\label{basis pm}\left\{  p_{n}(t)\right\}  _{n=0}^{\infty}:=\left\{
\frac{\sqrt{2n+1} P_{n}(t/\pi)}{\sqrt{ 2\pi}}\right\}  _{n=0}^{\infty}.
\end{equation}
Let us introduce new unknowns
\begin{equation}
\label{xi_n}\xi_{n} = \frac{\sqrt{2\pi} h_{n}}{\sqrt{2n+1}},\qquad n\ge0.
\end{equation}
Then $\{\xi_{n}\}_{n=0}^{\infty}\in\ell_{2}$ (as a sequence of the Fourier
coefficients). Let us rewrite the infinite linear system of equations as
\begin{equation}
\label{System for xin}%
\begin{split}
&  \sum_{n=0}^{\infty} \xi_{2n} \cdot\frac{(-1)^{n}\sqrt{4n+2} M_{k}
j_{2n}(\tilde\rho_{k} \pi)}{\sqrt\pi} +\sum_{n=0}^{\infty} \xi_{2n+1}
\cdot\frac{(-1)^{n}\sqrt{4n+2}j_{2n+1}(\tilde\rho_{k} \pi)}{\sqrt\pi\tilde
\rho_{k}}\\
&  +\omega\cdot M_{k} \cos\tilde\rho_{k} \pi+ \omega_{2}\cdot\frac{\sin
\tilde\rho_{k}\pi}{\tilde\rho_{k}} = M_{k} \tilde\rho_{n}\sin\tilde\rho_{k}%
\pi- \cos\tilde\rho_{k}\pi, \qquad k\in\mathbb{N}_{0}%
\end{split}
\end{equation}
(equation
\begin{equation}
\label{System for xin_infinity}\sum_{n=0}^{\infty} \xi_{2n} \cdot
\frac{(-1)^{n}\sqrt{4n+2} j_{2n}(\tilde\rho_{k} \pi)}{\sqrt\pi} +\omega
\cdot\cos\tilde\rho_{k} \pi= \tilde\rho_{k}\sin\tilde\rho_{k}\pi
\end{equation}
should be used if for some $k$ we have $M_{k}=\infty$).

\begin{theorem}
Suppose that the original Problem \ref{Problem Weyl_countable} is solvable and
the numbers $\{z_{k}\}_{k=0}^{\infty}$ satisfy the condition
\eqref{Condition on lambda n}. Then the infinite system \eqref{System for xin}
possesses a unique $\ell_{2}$ solution $\left\{  \omega, \omega_{2}, \{\xi
_{n}\}_{n=0}^{\infty}\right\}  $.
\end{theorem}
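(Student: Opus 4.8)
The plan is to prove existence and uniqueness separately, using the genuine solution guaranteed by solvability as a fixed reference against which every $\ell_2$ solution is compared. Existence is essentially free: since Problem \ref{Problem Weyl_countable} is assumed solvable, I fix a genuine solution $q,h,H$ and form its transmutation coefficients $\beta_n(\pi),\gamma_n(\pi)$ from Theorem \ref{Theorem K series representation}, the numbers $h_n$ of \eqref{h_n}, the parameters $\omega,\omega_2$ of \eqref{omega}, and finally $\xi_n$ through \eqref{xi_n}. As observed before \eqref{basis pm}, $\{\xi_n\}_{n\ge0}$ is the sequence of Fourier coefficients of $K_1(\pi,\cdot)+HK(\pi,\cdot)\in L_2(-\pi,\pi)$, hence lies in $\ell_2$; and because \eqref{System for xin} was obtained by substituting \eqref{Delta NSBF}, \eqref{Delta0 NSBF} into $M\Delta+\Delta^0\equiv0$ and evaluating at $\rho=\tilde\rho_k$, these numbers solve the system. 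It remains to show that this solution is the only one.

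For uniqueness, let $\{\omega',\omega_2',\{\xi_n'\}\}$ be an arbitrary $\ell_2$ solution, recover $h_n'$ via \eqref{xi_n}, and use \eqref{Delta NSBF}, \eqref{Delta0 NSBF} with the primed coefficients to define entire functions $\Delta',\Delta^{0\prime}$. Set $u:=\Delta'-\Delta$ and $v:=\Delta^{0\prime}-\Delta^0$. The leading terms $-\rho\sin\rho\pi$ and $\cos\rho\pi$ in \eqref{Delta NSBF}, \eqref{Delta0 NSBF} do not involve the unknowns, so they cancel and
\[
u(\rho^2)=\delta\omega\cos\rho\pi+2\sum_{n=0}^\infty(-1)^n\delta h_{2n}\,j_{2n}(\rho\pi),\qquad \rho v(\rho^2)=\delta\omega_2\sin\rho\pi+2\sum_{n=0}^\infty(-1)^n\delta h_{2n+1}\,j_{2n+1}(\rho\pi),
\]
where $\delta\omega=\omega'-\omega$, $\delta h_n=h_n'-h_n$, and so on. Subtracting the two copies of \eqref{System for xin} the inhomogeneous right-hand side disappears, leaving $M_k\,u(z_k)+v(z_k)=0$; since $M_k=-\Delta^0(z_k)/\Delta(z_k)$ by \eqref{M fraction}, the entire function $\Omega:=v\Delta-u\Delta^0$ vanishes at every $z_k$.

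The heart of the matter is to upgrade \emph{``vanishes at all $z_k$''} to \emph{``$\Omega\equiv0$''}, and this is the step I expect to be the main obstacle. Here I would invoke a Paley--Wiener structure: by the correspondence between Fourier--Legendre coefficients and cosine, resp.\ sine, transforms on $(-\pi,\pi)$ underlying \eqref{phi via T}--\eqref{S via T}, the two sums above are exactly the transforms of the $L_2$ functions with Fourier--Legendre coefficients $\{\delta h_{2n}\}$ and $\{\delta h_{2n+1}\}$. Hence $u$ and $\rho v$ are entire of exponential type $\le\pi$ in $\rho$ and bounded on the real axis, and, since $\Delta(\rho^2)\sim-\rho\sin\rho\pi$ and $\Delta^0(\rho^2)\sim\cos\rho\pi$ are of type $\pi$, the even function $\Omega(\rho^2)$ is entire of exponential type $\le2\pi$, bounded on $\mathbb R$, and vanishes at the points $\pm\tilde\rho_k$. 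Condition \eqref{Condition on lambda n} is precisely the density threshold that makes these zeros too dense for a nonzero function of this type, the same mechanism that yields Theorem \ref{Thm unique solvability}, so a Levinson-type completeness argument forces $\Omega\equiv0$. Making this rigorous requires pinning down the exact growth class of $\Omega$ and checking that \eqref{Condition on lambda n} lies on the correct side of the completeness threshold for type $2\pi$.

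Finally I would pass from $\Omega\equiv0$, i.e.\ $v\Delta\equiv u\Delta^0$, back to $u\equiv v\equiv0$. Because $\Delta$ and $\Delta^0$ are the characteristic functions of the genuine problems \eqref{bc1} and \eqref{bc2}, which differ only in the boundary condition at $0$, their zeros are simple and disjoint; thus $u$ vanishes at every simple zero of $\Delta$, so $w:=u/\Delta$ is entire, necessarily of exponential type $0$, and $w\to0$ along the real axis since $u$ is bounded while $|\Delta(\rho^2)|\gtrsim|\rho|$ there. By Phragm\'en--Lindel\"of $w\equiv0$, whence $u\equiv0$ and then $v\equiv0$. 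Now $\delta\omega=0$, being the coefficient of the only non-$L_2$ term $\cos\rho\pi$ in $u$, and injectivity of the cosine transform forces all $\delta h_{2n}=0$; symmetrically $\delta\omega_2=0$ and all $\delta h_{2n+1}=0$. Therefore $\omega=\omega'$, $\omega_2=\omega_2'$ and $\xi_n=\xi_n'$ for all $n$, which is the asserted uniqueness.
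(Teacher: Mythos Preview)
Your argument is correct and follows essentially the same route as the paper: build from an arbitrary $\ell_2$ solution a ``fake'' pair $\Delta',\Delta^{0\prime}$ via \eqref{Delta NSBF}--\eqref{Delta0 NSBF}, and reduce uniqueness to showing that the associated meromorphic function agrees with $M$ (your identity $\Omega\equiv 0$ is exactly $\Delta^{0\prime}/\Delta'=\Delta^0/\Delta$). The paper is more economical at the density step: rather than running a Levinson-type argument from scratch, it simply observes that the proof of Theorem~\ref{Thm unique solvability} in \cite{GRS1997} only needs the two functions in the quotient to be entire of the right order with the correct leading asymptotics, properties that $\Delta',\Delta^{0\prime}$ inherit from the integral representations \eqref{Delta inv}--\eqref{Delta0 inv}; it then cites that proof directly. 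Conversely, you spell out the passage from $\Omega\equiv 0$ back to $\delta\omega=\delta\omega_2=\delta h_n=0$ (disjointness of the zeros of $\Delta$ and $\Delta^0$, the Phragm\'en--Lindel\"of step, and injectivity of the cosine/sine transforms), which the paper compresses into the single word ``contradiction''. Both presentations are valid; yours is more self-contained at the cost of reproving part of \cite{GRS1997}, while the paper's is shorter but leaves the final extraction implicit.
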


\begin{proof}
Since the original Problem \ref{Problem Weyl_countable} is solvable, we can recover the Weyl funcion $M$ and by
\cite[Theorem 1.2.7]{Yurko2007} recover the potential $q$ and the constants $h$ and $H$. From \eqref{omega} we obtain $\omega$ and $\omega_2$, and by Theorem \ref{Theorem K series representation} we obtain the square integrable with respect to the second variable integral kernel $K$ and its derivative $K_1$, as well as the series representations \eqref{K Fourier series}, \eqref{K1 via gammas}. The coefficients $\beta_n$ and $\gamma_n$ give us the sequence $\{\xi_n\}_{n=0}^\infty \in \ell_2$. One can verify using \eqref{Delta NSBF} and \eqref{Delta0 NSBF} that the obtained numbers $\omega$, $\omega_2$ and $\xi_n$, $n\ge 0$ satisfy the system \eqref{System for xin}.
Now suppose that there is another $\ell_2$ solution $\left\{\tilde\omega, \tilde\omega_2, \{\tilde\xi_n\}_{n=0}^\infty\right\}$ of the system \eqref{System for xin}. Let $\tilde h_n = \frac{\sqrt{2n+1}}{\sqrt{2\pi}} \tilde \xi_n$, $n\ge 0$. Consider the following functions
\begin{align}
\tilde G(t) & = 2\sum_{n=0}^\infty \frac{\tilde h_{2n}}{\pi}P_{2n}\left(\frac t\pi\right),\label{Ginverse}\\
\tilde S(t) & = 2\sum_{n=0}^\infty \frac{\tilde h_{2n+1}}{\pi}P_{2n+1}\left(\frac t\pi\right),\qquad t\in[0,\pi].\label{Sinverse}
\end{align}
Due to condition $\{\tilde\xi_n\}_{n=0}^\infty \in \ell_2$ one has $\tilde G\in L_2(0,\pi)$ and $\tilde S\in L_2(0,\pi)$, hence the following functions
\begin{align}
\tilde \Delta(\lambda):=\tilde \Delta(\rho^2) &= -\rho\sin\rho\pi + \tilde\omega \cos \rho \pi+ \int_0^\pi \tilde G(t) \cos\rho t\,dt, \qquad \lambda\in\mathbb{C}, \label{Delta inv}\\
\tilde \Delta^0(\lambda):=\tilde \Delta^0(\rho^2) &= \cos \rho \pi + \tilde \omega_2 \frac{\sin \rho\pi}{\rho} + \frac 1\rho\int_0^\pi \tilde S(t) \sin\rho t\,dt, \qquad \lambda\in\mathbb{C}\setminus\{0\} \label{Delta0 inv}
\end{align}
are well defined (to specify $\rho$ we use the square root branch with $\operatorname{Im}\sqrt\lambda \ge 0$). The function $\tilde \Delta^0$ can be continuously extended to $\lambda=0$ as well. The resulting functions $\tilde\Delta$ and $\tilde\Delta^0$ are entire functions of order $1/2$. Moreover, representations \eqref{Delta NSBF} and \eqref{Delta0 NSBF} hold for the functions $\tilde\Delta$ and $\tilde\Delta^0$ if one changes $\omega$, $\omega_2$ and $h_n$ by $\tilde \omega$, $\tilde \omega_2$ and $\tilde h_n$ respectively.
Let us define
\[
\tilde M(\lambda) = -\frac{\tilde\Delta^0(\lambda)}{\tilde \Delta(\lambda)},\qquad \lambda\in\mathbb{C}.
\]
Then one can verify that
\[
\tilde M(z_k) = M_k = M(z_k),\quad k\ge 0.
\]
We would like to emphasize here that we do not know if the function $\tilde M$ is the Weyl function corresponding to some potential $q$, so Theorem \ref{Thm unique solvability} can not be applied directly. Nevertheless, the proof of this theorem in \cite{GRS1997} requires only that $\tilde M$ is a quotient of two entire functions satisfying some basic asymptotic conditions, which can be easily verified for the functions $\tilde \Delta$ and $\tilde\Delta^0$. Hence following the proof of Theorem 2.1 from \cite{GRS1997} we obtain that $\tilde M\equiv M$, a contradiction.
\end{proof}

Now suppose that only a finite set of pairs $\{z_{k},\, M_{k}\}_{k=0}^{K} :=
\{z_{k},\, M(z_{k})\}_{k=0}^{K}$ is given. We assume that the numbers $z_{k}$
are real (the system of equations \eqref{System for xin} can be formulated for
complex numbers $z_{k}$ as well) and ordered: $z_{k} < z_{k+1}$, $0\le k\le
K-1$. Clearly any finite sequence of numbers $z_{k}$, $k\le K$, can be
extended to an infinite one satisfying the condition
\eqref{Condition on lambda n}, which is not of a great use since the truncated
system \eqref{System for xin} for some particular choices of $z_{k}$, $k\le K$
may not be solved uniquely. For example, taking $z_{k} = k^{2}$ for all $k\le
K$ makes it impossible to find the coefficient $\omega_{2}$. As a rule of
thumb we can ask that the terms in \eqref{Condition on lambda n} remain small
and bounded, even better if they decay as $k$ increases. So we can formulate
the following empirical requirement for the numbers $z_{k} =\tilde\rho_{k}%
^{2}$:
\[
\tilde\rho_{k} < \frac k2+1\qquad\text{or equivalently}\qquad z_{k} <
\frac{k^{2}}{4} + k + 1,
\]
at least starting from some $k=K^{\prime}<K$. Such requirement is sufficient
to deal, for example, with the two spectra inverse problem, for which
$\{z_{k}\}_{k=0}^{2K} = \{\lambda_{k}\}_{k=0}^{K} \cup\{\nu_{k}\}_{k=0}^{K}$.

Let us consider a truncated system \eqref{System for xin},
\begin{equation}
\label{System for xin truncated}%
\begin{split}
&  \sum_{n=0}^{[(K-2)/2]} \xi_{2n} \cdot\frac{(-1)^{n}\sqrt{4n+2} M_{k}
j_{2n}(\tilde\rho_{k} \pi)}{\sqrt\pi} +\sum_{n=0}^{[(K-3)/2]} \xi_{2n+1}
\cdot\frac{(-1)^{n}\sqrt{4n+2}j_{2n+1}(\tilde\rho_{k} \pi)}{\sqrt\pi\tilde
\rho_{k}}\\
&  +\omega\cdot M_{k} \cos\tilde\rho_{k} \pi+ \omega_{2}\cdot\frac{\sin
\tilde\rho_{k}\pi}{\tilde\rho_{k}} = M_{k} \tilde\rho_{k}\sin\tilde\rho_{k}%
\pi- \cos\tilde\rho_{k}\pi, \qquad0\le k\le K.
\end{split}
\end{equation}
Solving this system one obtains approximate values of the parameters $\omega$,
$\omega_{2}$ and the coefficients $h_{0},\ldots,h_{K-2}$.

The coefficient matrix of the truncated system
\eqref{System for xin truncated} can be badly conditioned. Since we know that
the coefficients $h_{n}$ decay, see Proposition \ref{Prop coeff decay}, there
is no need to look for the same number of the unknowns as the number of points
$\tilde{\lambda}_{k}$. One may consider less unknowns and treat the system
\eqref{System for xin truncated} as an overdetermined one. Asking for the
condition number of the resulting coefficient matrix to be relatively small
one estimates an optimum number $M$, $M\leq K-2$, of the coefficients
$h_{0},\ldots,h_{M}$. We refer the reader to \cite[Subsection 3.2 and Section
5]{KT2020} for additional details.

\subsection{Recovery of eigenvalues $\lambda_{n}$ and norming constants
$\alpha_{n}$}

\label{Subsect Norming constants} Suppose we have the coefficients $\omega$,
$\omega_{2}$ and $\{h_{n}\}_{n=0}^{\infty}$ recovered as described in
Subsection \ref{Subsect Recovery hn}. Then we can use the representation
\eqref{Delta NSBF} to calculate the characteristic function $\Delta(\rho^{2})$
for any value of $\rho$. Recalling that the eigenvalues of the problem
\eqref{SL equation}, \eqref{bc1} are real and coincide with zeros of
$\Delta(\lambda)$, the recovery of the eigenvalues reduces to finding zeros of
the entire function $\Delta(\lambda)$.

Having the spectrum $\lambda_{k}=\rho_{k}^{2}$, the corresponding norming
constants can be found using \eqref{alphan from M}, \eqref{Delta0 NSBF} and
\eqref{Delta prime NSBF}. Indeed,
\[
\alpha_{k} = - \frac{1}{\Delta^{0}(\lambda_{k})}\cdot\left.  \frac{d}%
{d\lambda}\Delta(\lambda)\right|  _{\lambda=\lambda_{k}} = -\frac{1}{2\rho_{k}
\Delta^{0}(\rho_{k}^{2})} \left.  \frac{d}{d\rho}\Delta(\rho^{2})\right|
_{\rho=\rho_{k}}.
\]

Suppose we have only a finite number of coefficients $h_{0},\ldots,h_{M}$.
Then approximate eigenvalues are sought as zeros of the function
\begin{equation}
\Delta_{M}(\rho^{2})=-\rho\sin\rho\pi+\omega\cos\rho\pi+2\sum_{n=0}%
^{[M/2]}(-1)^{n}h_{2n}j_{2n}(\rho\pi), \label{Delta approx}%
\end{equation}
and afterwards for each approximate eigenvalue $\lambda_{k}=\rho_{k}^{2}$ the
corresponding norming constant can be obtained from
\begin{equation}
\alpha_{k}\approx\frac{(1+\pi\omega)\sin\rho_{k}\pi-\pi\rho_{k}\cos\rho_{k}%
\pi+2\sum_{n=0}^{[M/2]}(-1)^{n}h_{2n}\left(  \frac{2n}{\rho_{k}}j_{2n}%
(\rho_{k}\pi)-\pi j_{2n+1}(\rho_{k}\pi)\right)  }{2\rho_{k}\cos\rho_{k}%
\pi+2\omega_{2}\sin\rho_{k}\pi+4\sum_{n=0}^{[(M-1)/2]}(-1)^{n}h_{2n+1}%
j_{2n+1}(\rho_{k}\pi)}. \label{alpha_k approx}%
\end{equation}

\subsection{Main system of equations}

Known the eigenvalues $\{\lambda_{n}\}_{n=0}^{\infty}$ and the corresponding
norming constants $\{\alpha_{n}\}_{n=0}^{\infty}$, the potential $q$ and the
parameters $h$ and $H$ can be recovered by solving the Gelfand-Levitan
equation \eqref{Gelfand-Levitan}. In \cite{Kr2019JIIP} (see also
\cite{KrBook2020}) using representation \eqref{K Fourier series} the
Gelfand-Levitan equation was reduced to an infinite system of linear algebraic
equations for the coefficients $\beta_{2n}(x)$. In \cite{KT2020} a more
accurate modification of the method was proposed. We present the final result
from \cite{KT2020} below.

For this subsection and without loss of generality, we suppose that $\rho
_{0}=0$. This always can be achieved by a simple shift of the potential. If
originally $\rho_{0}\neq0$, then we can consider the new potential
$\widetilde{q}(x):=q(x)-\rho_{0}^{2}$. Obviously, the eigenvalues $\left\{
\lambda_{n}\right\}  _{n=0}^{\infty}$ shift by the same amount, while the
numbers $h$, $H$ and $\left\{  \alpha_{n}\right\}  _{n=0}^{\infty}$ do not
change. After recovering $\widetilde{q}(x)$ from the shifted eigenvalues, one
gets the original potential $q(x)$ by adding $\rho_{0}^{2}$ back.

Let us denote
\begin{align}
\widetilde c_{km}(x)  &  = -\frac{\omega x}{8\pi} \left(  \frac{\delta
_{m,k-1}}{(2k-3/2)_{3}} - \frac{2\delta_{m,k}}{(2k-1/2)_{3}} + \frac
{\delta_{m,k+1}}{(2k+1/2)_{3}}\right) \nonumber\\
&  \quad+ (-1)^{k+m}\sum_{n=1}^{\infty}\left[  \frac{j_{2k}(\rho_{n} x)
j_{2m}(\rho_{n} x)}{\alpha_{n}} - \frac{2 j_{2k}(nx) j_{2m}(nx)}{\pi} \right.
\label{c_km tilde}\\
&  \quad+ \left.  \frac{2\omega}{\pi^{2} n}\left(  x j_{2k}(nx) j_{2m+1}(nx) +
x j_{2k+1}(nx)j_{2m}(nx)- \frac{2(k+m)j_{2k}(nx) j_{2m}(nx)}{n}\right)
\right]  ,\nonumber\\
\widetilde C_{0m}(x)  &  = \left(  \frac1{\alpha_{0}} - \frac1\pi
+\frac{2\omega x^{2}}{3\pi^{2}}\right)  \delta_{0m} + \frac{2\omega x^{2}%
}{15\pi^{2}}\delta_{1m} + \widetilde c_{0m}(x),\label{C_0m tilde}\\
\widetilde C_{1m}(x)  &  = \frac{2\omega x^{2}}{15\pi^{2}}\delta_{0m} +
\widetilde c_{1m}(x),\label{C_1m tilde}\\
\widetilde C_{km}(x)  &  = \widetilde c_{km}(x),\qquad k=2,3,\ldots,
\ m\in\mathbb{N}_{0}, \label{C_km tilde}%
\end{align}
and
\begin{equation}
\label{d_k til}%
\begin{split}
\widetilde d_{k}(x)  &  = -\left(  \frac1{\alpha_{0}} - \frac1\pi
+\frac{4\omega x^{2}}{3\pi^{2}}-\frac{\omega x}{\pi}\right)  \delta_{k0} -
\frac{2\omega x^{2}}{15\pi^{2}}\delta_{k1}\\
&  \quad-(-1)^{k}\sum_{n=1}^{\infty}\left[  \frac{\cos\rho_{n}x}{\alpha_{n}%
}j_{2k}(\rho_{n}x)-\frac{2\cos nx}{\pi}j_{2k}(nx)\right. \\
&  \quad+ \left.  \frac{2\omega}{\pi^{2} n} \left(  x \sin nx j_{2k}(nx) +
x\cos nx j_{2k+1}(nx) - \frac{2k}{n} \cos nx j_{2k}(nx)\right)  \right]  ,
\end{split}
\end{equation}
where $\delta_{k,m}$ stands for the Kronecker delta and $(k)_{m}$ is the
Pochhammer symbol.

Then the following result is valid.

\begin{theorem}
[\cite{KT2020}]\label{Thm main system alt} The coefficients $\beta_{2m}(x)$
satisfy the system of linear algebraic equations
\begin{equation}
\frac{\beta_{2k}(x)}{(4k+1)x} + \sum_{m=0}^{\infty}\beta_{2m}(x)\widetilde
C_{km}(x)=\frac{\widetilde d_{k}(x)}2,\qquad k=0,1,\ldots. \label{G-L-alt}%
\end{equation}
For all $x\in\left(  0,\pi\right]  $ and $k=0,1,\ldots$ the series in
\eqref{G-L-alt} converges.
\end{theorem}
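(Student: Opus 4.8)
The plan is to reduce the Gelfand-Levitan integral equation \eqref{Gelfand-Levitan} to the algebraic system \eqref{G-L-alt} by projecting it, for each fixed $x\in(0,\pi]$, onto the even Legendre polynomials $P_{2k}(t/x)$ that generate the expansion of $G$. Concretely, I would multiply \eqref{Gelfand-Levitan} by $P_{2k}(t/x)$ and integrate in $t$ over $(0,x)$, using the orthogonality relation $\int_0^x P_{2n}(t/x)P_{2k}(t/x)\,dt=\frac{x}{4k+1}\delta_{nk}$. Since $G(x,t)=\sum_{n}\frac{2\beta_{2n}(x)}{x}P_{2n}(t/x)$ is even in $t$, the term $G(x,t)$ collapses, after the suitable normalization by $1/(2x)$, to the diagonal contribution $\frac{\beta_{2k}(x)}{(4k+1)x}$ on the left-hand side of \eqref{G-L-alt}; the remaining two terms of \eqref{Gelfand-Levitan} must then reproduce $\sum_m \beta_{2m}(x)\widetilde C_{km}(x)$ and $\widetilde d_k(x)/2$.

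The computational engine of the whole reduction is the classical identity linking Legendre polynomials to spherical Bessel functions,
\[
\int_0^x P_{2k}(t/x)\cos(\rho t)\,dt=(-1)^k x\,j_{2k}(\rho x),
\]
which is exactly the relation that turns the Fourier-Legendre representation \eqref{K Fourier series} into the NSBF expansion \eqref{phi}. Every term of $F$ is a combination of products $\cos(\rho_n t)\cos(\rho_n s)$ and $\cos(nt)\cos(ns)$, so projecting $F(t,s)$ in both variables produces precisely the products $j_{2k}(\rho_n x)j_{2m}(\rho_n x)$ and $j_{2k}(nx)j_{2m}(nx)$ appearing in $\widetilde c_{km}$ in \eqref{c_km tilde}, while projecting $F(x,t)$ in the single variable $t$ produces the $\cos(\rho_n x)j_{2k}(\rho_n x)$ and $\cos(nx)j_{2k}(nx)$ terms of $\widetilde d_k$ in \eqref{d_k til}. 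For the correction pieces of \eqref{F alt} carrying factors $t\sin(nt)$ I would differentiate the identity above in $\rho$, obtaining $\int_0^x t\sin(nt)P_{2k}(t/x)\,dt=(-1)^k\bigl(x^2 j_{2k+1}(nx)-\frac{2kx}{n}j_{2k}(nx)\bigr)$ via the recurrence $j_{2k}'(z)=\frac{2k}{z}j_{2k}(z)-j_{2k+1}(z)$; this accounts for the mixed $j_{2k}$ and $j_{2k+1}$ terms together with the $1/n$ factors visible in \eqref{c_km tilde} and \eqref{d_k til}.

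It is essential to use the regularized representation \eqref{F alt} rather than the original series \eqref{F}, both for convergence and to generate the Kronecker-delta corrections. The non-oscillatory pieces of \eqref{F alt} — the constants $1/\alpha_0-1/\pi$ (recall that $\rho_0=0$ in this subsection) and the piecewise-polynomial term $-\frac{\omega}{\pi^2}(\pi\max\{x,t\}-x^2-t^2)$ — are of low degree in $t$, so upon projection they contribute only to $P_0$ and $P_2$; integrating them against $P_{2k}(t/x)$ gives nonzero results only for $k\in\{0,1\}$ (and $m\in\{0,1\}$), which is exactly the origin of the extra $\delta_{0m}$, $\delta_{1m}$, $\delta_{k0}$, $\delta_{k1}$ terms distinguishing $\widetilde C_{0m}$, $\widetilde C_{1m}$ in \eqref{C_0m tilde}, \eqref{C_1m tilde} and the leading terms of $\widetilde d_k$ in \eqref{d_k til} from the generic case $\widetilde C_{km}=\widetilde c_{km}$. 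Assembling all projected contributions and dividing by the normalization constant yields \eqref{G-L-alt}.

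The main obstacle will be justifying the termwise manipulations and, above all, the stated convergence of the series in \eqref{G-L-alt} for every $x\in(0,\pi]$. The original kernel series \eqref{F} converges only slowly and jumps at $x=t=\pi$, so interchanging summation with the double integration is not legitimate there; the whole point of \eqref{F alt} is that its summand decays like $1/n^2$, which, combined with the boundedness and decay of the spherical Bessel factors and with the coefficient decay from Proposition \ref{Prop coeff decay}, makes the projected $n$-series for $\widetilde c_{km}$ and $\widetilde d_k$ absolutely convergent and legitimizes the exchange of sum and integral. I would therefore spend the bulk of the proof establishing these uniform estimates, treating the endpoint $x=\pi$ with care, and verifying that $\{\beta_{2m}(x)\}_m\in\ell_2$ so that the series $\sum_m \beta_{2m}(x)\widetilde C_{km}(x)$ converges; the algebraic identification of the coefficients is then routine.
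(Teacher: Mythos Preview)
The paper does not prove this theorem; it is quoted verbatim from \cite{KT2020} and no argument is given here, so there is nothing in the present paper to compare your proposal against.

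That said, your plan is the natural one and almost certainly the one carried out in \cite{KT2020}: project the Gelfand--Levitan equation \eqref{Gelfand-Levitan} onto $P_{2k}(\cdot/x)$, use the Legendre--Bessel identity $\int_0^x P_{2k}(t/x)\cos(\rho t)\,dt=(-1)^k x\,j_{2k}(\rho x)$ (and its $\rho$-derivative) to convert cosines to spherical Bessel functions, and rely on the accelerated representation \eqref{F alt} to guarantee absolute convergence of the resulting $n$-series. One small inaccuracy in your account: you attribute all the Kronecker-delta corrections to the low-degree polynomial pieces of \eqref{F alt} and claim they affect only $k,m\in\{0,1\}$. That is true for the constants and for the $x^2+t^2$ part, but the term $-\frac{\omega}{\pi}\max\{t,s\}$, when it enters the double integral $\int_0^x\!\int_0^x F(t,s)P_{2k}(t/x)P_{2m}(s/x)\,ds\,dt$, is not a polynomial of low degree in either variable; its double Legendre expansion is what produces the tridiagonal block $\delta_{m,k-1},\,\delta_{m,k},\,\delta_{m,k+1}$ with the Pochhammer denominators in \eqref{c_km tilde}, present for \emph{all} $k$. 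Apart from this, your convergence strategy (use \eqref{F alt} for $O(n^{-2})$ decay of the summand, boundedness of $j_{2k}$, and $\{\beta_{2m}(x)\}\in\ell_2$ from Theorem~\ref{Theorem K series representation}) is exactly what is needed.
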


It is of crucial importance the fact that for recovering the potential $q$ as
well as the constants $h$ and $H$ it is not necessary to compute many
coefficients $\beta_{2m}(x)$ (that would be equivalent to approximate the
solution of the Gelfand-Levitan equation) but instead the sole $\beta_{0}(x)$
is sufficient for this purpose, see Corollary \ref{Rem q from beta0}.

For the numerical solution of the system \eqref{G-L-alt} it is natural to
truncate the infinite system, i.e., to consider $m,k\leq N$. As was shown in
\cite{KT2020}, the truncation process possesses some very nice properties.
Namely, the truncated system is uniquely solvable for all sufficiently large
$N$, the approximate solutions converge to the exact one, the condition
numbers of the coefficient matrices are uniformly bounded with respect to $N$,
and the solution is stable with respect to small errors in the coefficients.
Moreover, a reduced number of equations in a truncated system is sufficient
for recovering the coefficient $\beta_{0}$ with a high accuracy. However it
should be noted that a lot of (approximate) eigenvalues and norming constants
are necessary to obtain values of the series
\eqref{c_km tilde}--\eqref{d_k til} accurately enough. As was shown in
Subsection \ref{Subsect Norming constants}, it is not a problem, thousands of
approximate eigendata can be computed.

\subsection{Algorithm of solution of Problem \ref{Problem Weyl_countable}}

Given a finite set of points $\left\{  z_{n}\right\}  _{n=0}^{N}=\left\{
\tilde{\rho}_{n}^{2}\right\}  _{n=0}^{N}$ and the corresponding values
$M_{n}=M(z_{n})$, $0\leq n\leq N$ of the Weyl function at these points, the
following direct method for recovering the potential $q$ and the numbers $h$
and $H$ is proposed.

\begin{enumerate}
\item \label{Step1} Solving (overdetermined) system
\eqref{System for xin truncated} find the coefficients $\omega$, $\omega_{2}$
and $h_{0},\ldots,h_{M}$.

\item Compute $h = \omega-\omega_{2}$.

\item \label{Step3} Find approximate eigenvalues $\lambda_{k}$, $k\leq K$ as
zeros of \eqref{Delta approx}. Compute the corresponding norming constants
$\alpha_{k}$, $k\leq K$ using \eqref{alpha_k approx}.

\item \label{Step4} If $\rho_{0}\neq0$, perform the shift of the eigenvalues
\[
\tilde{\rho}_{k}=\sqrt{\rho_{k}^{2}-\rho_{0}^{2}},\qquad k\geq0,
\]
so that $0$ becomes the first eigenvalue of the spectral problem. The
parameters $\omega$ and $\omega_{2}$ have to be shifted as well,
\[
\tilde{\omega}=\omega-\frac{\pi}{2}\rho_{0}^{2},\qquad\tilde{\omega}%
_{2}=\omega_{2}-\frac{\pi}{2}\rho_{0}^{2}.
\]
Let us denote the square roots of the shifted eigenvalues by the same
expression $\rho_{k}$, and similarly for the shifted parameters $\omega$ and
$\omega_{2}$.

\item For a set of points $\left\{  x_{l}\right\}  $ from $(0,\pi]$, compute
the approximate values of the coefficients $\widetilde C_{km}(x)$ and
$\widetilde d_{k}(x)$ for $k,m=0,\ldots,N$ with the aid of the formulas
\eqref{c_km tilde}--\eqref{d_k til} and solve the truncated system
\eqref{G-L-alt} obtaining thus $\beta_{0}(x)$.

\item \label{Step6} Compute $q$ from \eqref{q from beta0}. Take into account
that $\varphi(0,x)$ is an eigenfunction associated with the first eigenvalue
$\lambda_{0}$ and hence does not have zeros on $[0,\pi]$ (see, e. g.,
\cite[Theorem 8.4.5]{Atkinson}). This justifies the division over
$\varphi(0,x)$.

\item Compute $H$ using \eqref{omega}. For this compute the mean of the
potential $\int_{0}^{\pi}q(t)\,dt$, and thus,
\[
H=\omega_{2}-\frac{1}{2}\int_{0}^{\pi}q(t)\,dt.
\]

\item Recall that one has to add the original eigenvalue $\rho_{0}^{2}$ back
to the recovered potential to return to the original potential $q$.
\end{enumerate}

\begin{remark}
The idea of interval flipping proposed in \cite[Subsection 3.6]{KT2020} to
improve the recovery of the potential near $x=\pi$ can be adapted for the
proposed method as well, the formulas (3.39) and (3.40) from \cite{KT2020} are
directly applicable with the data obtained on Steps \ref{Step1}--\ref{Step4}.
We leave the details to the reader.
\end{remark}

\begin{remark}
The proposed method with few modifications can be adapted to the case of
Dirichlet boundary condition at $x=\pi$, corresponding to the value $H=\infty
$. The Weyl function in such case is given by
\[
M_{\infty}(\lambda)=-\frac{S(\rho,\pi)}{\varphi(\rho,\pi)}.
\]
Following the described steps one can similarly obtain an infinite system of
equations for the coefficients $\beta_{n}(\pi)$. The coefficients $\omega$ and
$\omega_{2}$ are no longer necessary. Squares of the zeros of the function
$\varphi(\rho,\pi)$ are the eigenvalues, and the corresponding norming
constants can be obtained similarly. Now one has to solve an inverse problem
by the given spectral density function in the case when one boundary condition
is of the Dirichlet type. It can be done with the use of a Gelfand-Levitan
equation with a different kernel function $F$, see \cite[Chapter 2,
\S 9]{LevitanInverse}, \cite{HT2021} and references therein. We leave the
details for a separate paper.
\end{remark}

Below, in Section \ref{Sect Numerical examples} we illustrate the performance
of this algorithm with several numerical examples.

\section{Numerical examples}

\label{Sect Numerical examples} The proposed algorithm can be implemented
directly, similarly to the algorithm from \cite{KT2020}, only several
observations should be made. On Step \ref{Step1}, to determine the number of
unknowns to look for, we used the following criteria: we try to recover at
least 10 (if less than 30 points $z_{n}$ were given) or at least 20
coefficients $h_{n}$ (for more than 30 points $z_{n}$ given); if the condition
number of the coefficient matrix of the system
\eqref{System for xin truncated} is less than 1000, we increase the number of
recovered coefficients $h_{n}$ until the condition number surpasses 1000. A
significant time saving can be achieved by computing the spherical Bessel
functions $j_{m}(\tilde{\rho}_{k}\pi)$ using a backwards recursion, see
\cite{KT2020}, \cite{Barnett}, \cite{GillmanFiebig} for details.

The number $K$ of the eigenvalues to compute on Step \ref{Step3} can be
estimated from the decay of the coefficients $h_{n}$. Faster decay means
smoother potential, larger value of $K$. Say, $K=10^{4}$ for smooth potentials
(coefficients $h_{n}$ decay very fast) and up to $K=10^{3}$ for non-smooth
potentials (coefficients $h_{n}$ decay slowly). For several examples
originating from the inverse problems considered in Subsection
\ref{Subsect Example Problems}, one can take an arbitrary value of the
parameter $h$ when transforming the Weyl-Titchmarsh $m$-function into the
function $M$ using \eqref{m and M}. In all these examples we took $h=0$. Since
the algorithm recovers two parameters $\omega$ and $\omega_{2}$ which are
equal due to the choice $h=0$, we used the difference $|\omega-\omega_{2}|$ as
some additional indicator of the accuracy of the recovered coefficients.
Smaller value $|\omega-\omega_{2}|$ suggests to take more approximate
eigenvalues (larger value of $K$ used, as was aforementioned).

Similarly to \cite{KT2020}, the number of points $x_{l}$ taken on Step
\ref{Step4} should not be large, about one hundred uniformly spaced points
works best. We used 8 equations in the truncated system
\eqref{System for xin truncated} in all the examples. All the computations
were performed in Matlab 2017 on an Intel i7-7600U equipped laptop computer.
We used spline approximation and differentiated the obtained spline on Step
\ref{Step6}. In all examples where eigenvalues or solution of a Cauchy problem
were necessary, we applied the method from \cite{KNT}.

First we consider several inverse spectral problems which can be reduced to
Problem \ref{Problem Weyl_countable}. In the last subsection we illustrate
that the proposed algorithm can be applied even for complex points $z_{n}$,
but may fail for some particular choices of the points.

\subsection{Two spectra}

\label{Subsect2spectra} Consider an inverse problem of recovering the
potential and boundary parameters by two spectra, see Subsubsection
\ref{Subsubsect Two spectrum}. The same problem was considered in
\cite{KT2020} with the only difference that the shared boundary condition was
at the point $0$ (which is not essential since one can apply the change of
variable $x\leftrightarrow\pi-x$). Since for this problem the given values of
the Weyl function are either 0 or $\infty$, the system \eqref{Equation for hn}
splits into two independent systems. One (given by
\eqref{Equation for hn_infty}) coincides with the system (3.8) in
\cite{KT2020}, but the second system is different from (4.6) from
\cite{KT2020}. Moreover, in \cite{KT2020} we recovered the parameters $\omega$
and $\omega_{2}$ and obtained larger index eigenvalues from the eigenvalue
asymptotics, and used the solutions of the systems only to compute the norming
constants. While in this example we applied the proposed algorithm as is,
without separating the system \eqref{Equation for hn} or previously obtaining
the values of $\omega$ and $\omega_{2}$, and computing all the eigenvalues
from the obtained approximate characteristic function. Of course, one can not
expect obtaining the same accuracy as those of \cite{KT2020}.

We considered three potentials: smooth $q_{1}(x)=\frac{16}{\pi^{2}}x^{2}%
\exp\left(  2-\frac{8x}{\pi}\right)  $ (potential from \cite{Chadan et al
1997}, adapted to the interval $[0,\pi]$), non-smooth continuous
$q_{2}(x)=\bigl|3-|x^{2}-3|\bigr|$ (potential from \cite{KB2008}) and
discontinuous
\[
q_{3}(x)=%
\begin{cases}
0, & x\in\lbrack0,\frac{\pi}{8}]\cup\lbrack\frac{3\pi}{8},\frac{3\pi}{5}),\\
-\frac{12x}{\pi}+\frac{3}{2}, & x\in(\frac{\pi}{8},\frac{\pi}{4}],\\
\frac{12x}{\pi}-\frac{9}{2}, & x\in(\frac{\pi}{4},\frac{3\pi}{8}),\\
4, & x\in\lbrack\frac{3\pi}{5},\frac{4\pi}{5}),\\
2, & x\in\lbrack\frac{4\pi}{5},\pi].
\end{cases}
\]
(potential matching the one from \cite{Chadan et al 1997}). For all potentials
we took $h=1$ and $H=2$.

On Figure \ref{Ex0Fig1} we present the recovered potential $q_{1}$ and its
absolute error. 6, 10 or 16 eigenvalue pairs were used for recovery. As one
can see, the error almost stabilizes. For 16 eigenvalue pairs the parameter
$q$ was obtained with $L_{1}(0,\pi)$ error of $8.6\cdot10^{-7}$, the constants
$h$ and $H$ were obtained with the errors of $4.6\cdot10^{-10}$ and
$2.3\cdot10^{-7}$.

\begin{figure}[p]
\centering
\includegraphics[bb=0 0 216 173
height=2.2in,
width=2.7in
]{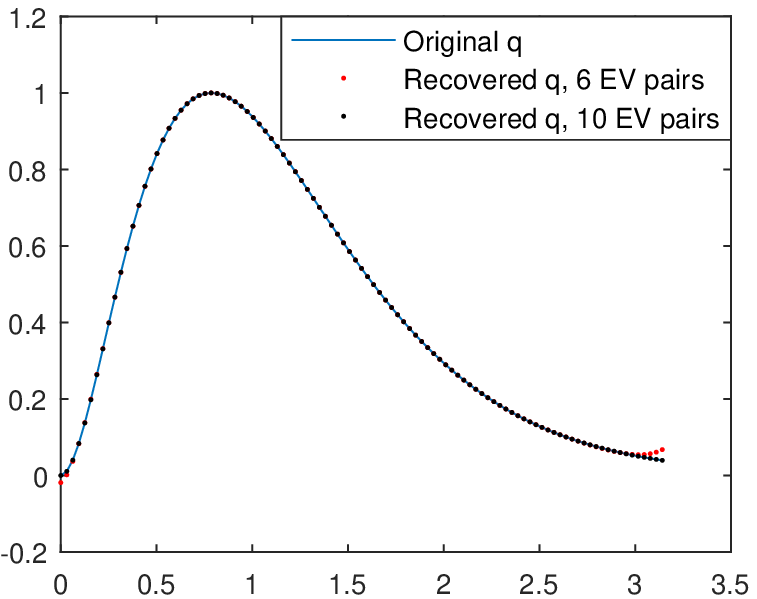} \quad\includegraphics[bb=0 0 216 173
height=2.2in,
width=2.7in
]{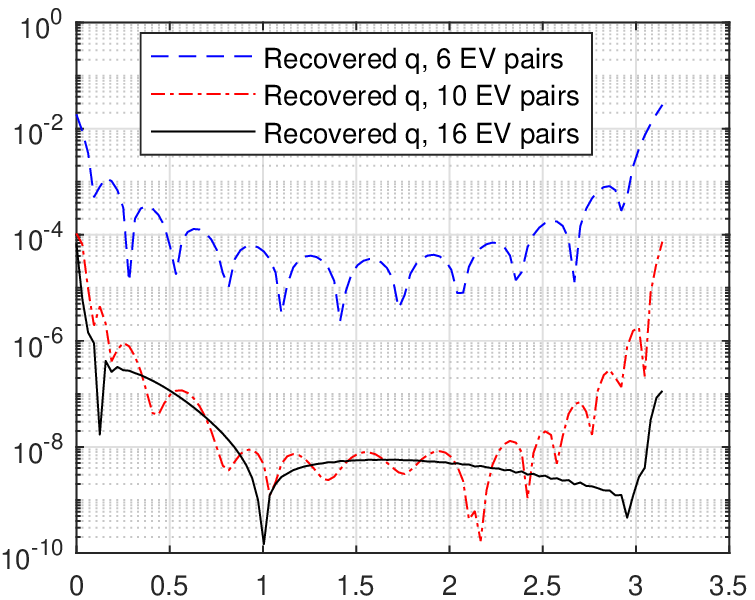}\caption{On the left: exact (blue line), recovered from 6
eigenvalue pairs (red dots) and recovered from 10 eigenvalue pairs (black
dots) potential $q_{1}$ from Subsection \ref{Subsect2spectra}. On the right:
absolute error of the recovered potential, 6, 10 or 16 eigenvalue pairs were
used. }%
\label{Ex0Fig1}%
\end{figure}

On Figure \ref{Ex0Fig2} we present the potential $q_{2}$ recovered from 40 and
201 eigenvalue pairs. For 40 eigenvalue pairs the potential $q$ was obtained
with $L_{1}(0,\pi)$ error of $4.4\cdot10^{-2}$, the constants $h$ and $H$ were
obtained with the errors of $0.017$ and $0.024$. For 201 eigenvalue pairs,
with $1.2\cdot10^{-2}$, $0.011$ and $0.014$, respectively.

\begin{figure}[p]
\centering
\includegraphics[bb=0 0 216 173
height=2.2in,
width=2.7in
]{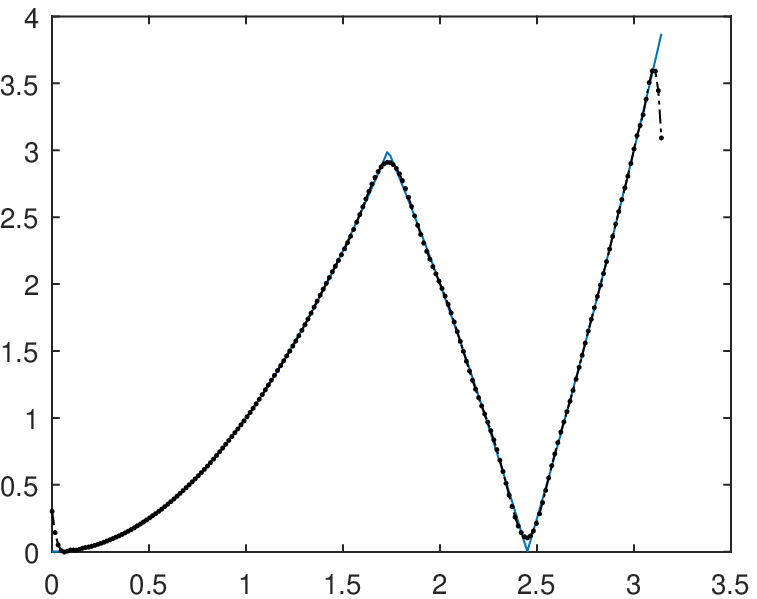} \quad\includegraphics[bb=0 0 216 173
height=2.2in,
width=2.7in
]{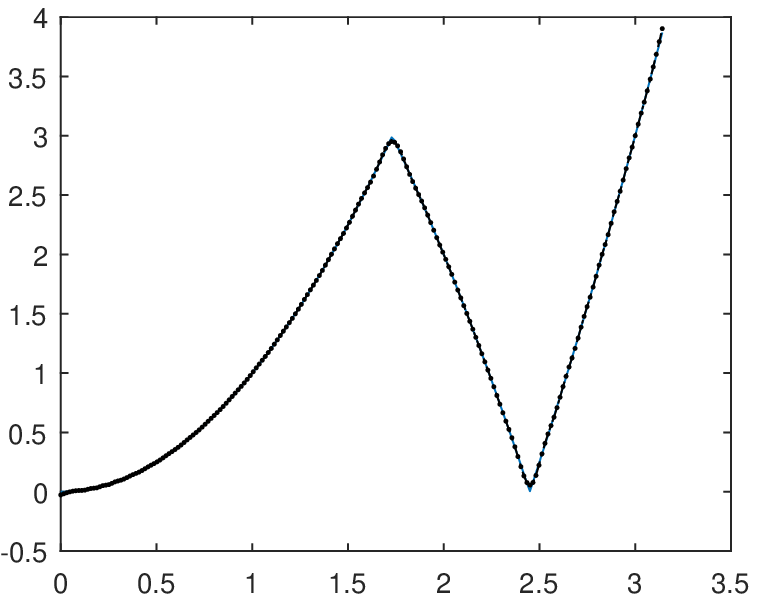}\caption{On the left: exact (blue line) and recovered from 40
eigenvalue pairs (black dots); on the right: exact (blue line) and recovered
from 201 eigenvalue pairs (black dots) potential $q_{2}$ from Subsection
\ref{Subsect2spectra}. }%
\label{Ex0Fig2}%
\end{figure}

On Figure \ref{Ex0Fig3} we present the potential $q_{3}$ recovered from 30 and
201 eigenvalue pairs. While for 30 eigenvalue pairs the algorithm failed near
the interval endpoints, inside the interval $(0.5, \pi-0.5)$ the result is
acceptable. For 201 eigenvalue pairs the algorithm was able to recover both
potential and boundary parameters $h$ and $H$, $L_{1}$ error of the potential
was $0.24$, absolute errors of the boundary parameters were $0.0076$ and
$0.029$, respectively.

\begin{figure}[p]
\centering
\includegraphics[bb=0 0 216 173
height=2.2in,
width=2.7in
]{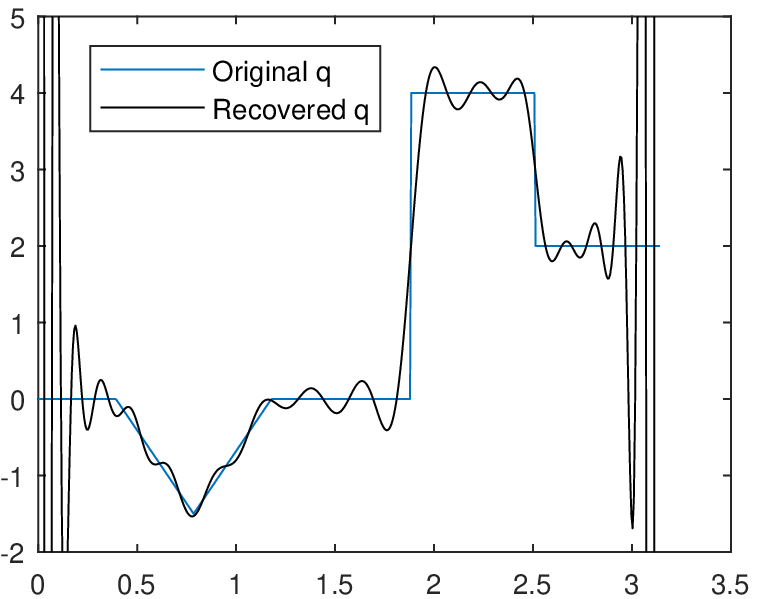} \quad\includegraphics[bb=0 0 216 173
height=2.2in,
width=2.7in
]{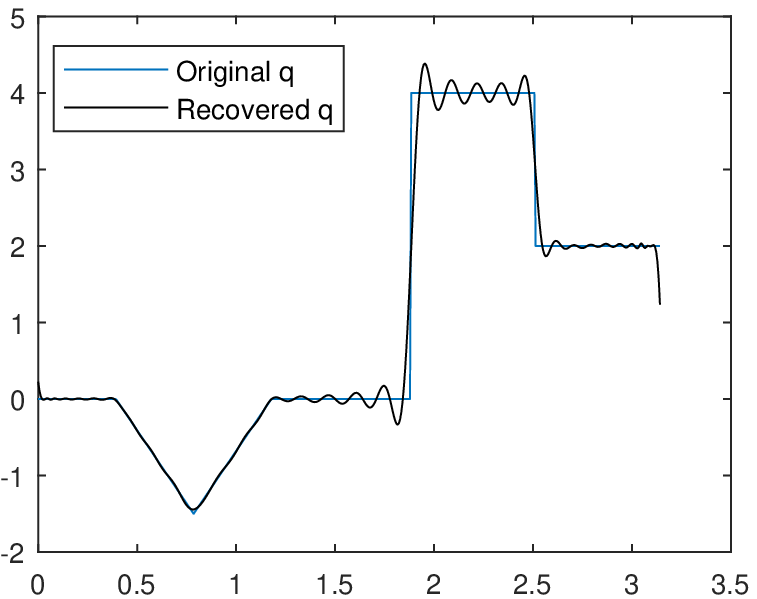}\caption{On the left: exact (blue line) and recovered from 30
eigenvalue pairs (black dots); on the right: exact (blue line) and recovered
from 201 eigenvalue pairs (black dots) potential $q_{3}$ from Subsection
\ref{Subsect2spectra}. }%
\label{Ex0Fig3}%
\end{figure}

\subsection{Inverse problems with known parts of several spectra}

\label{Subsect3spectra} As it is stated in \cite{GRS1997}, ``Two-thirds of the
spectra of three spectral problems uniquely determine $q$.'' In this
subsection we numerically illustrate the solution of such inverse problems.

To illustrate the performance of the algorithm we considered two potentials,
the same smooth potential $q_{1}$ and the following potential, possessing a
discontinuous second derivative $q_{4}(x)=\int_{0}^{x}q_{2}(s)\,ds$, where
$q_{1}$ and $q_{2}$ were introduced in Subsection \ref{Subsect2spectra}. We
took $H=2$ and $h\in\{1,2,3\}$, and eigenvalues indexed $0,1,3,4,6,7,\ldots$
from the first spectrum, $0,2,3,5,6,8,9,\ldots$ from the second spectrum and
$1,2,4,5,7,8,\ldots$ from the third spectrum.

\begin{figure}[p]
\centering
\includegraphics[bb=0 0 216 173
height=2.2in,
width=2.7in
]{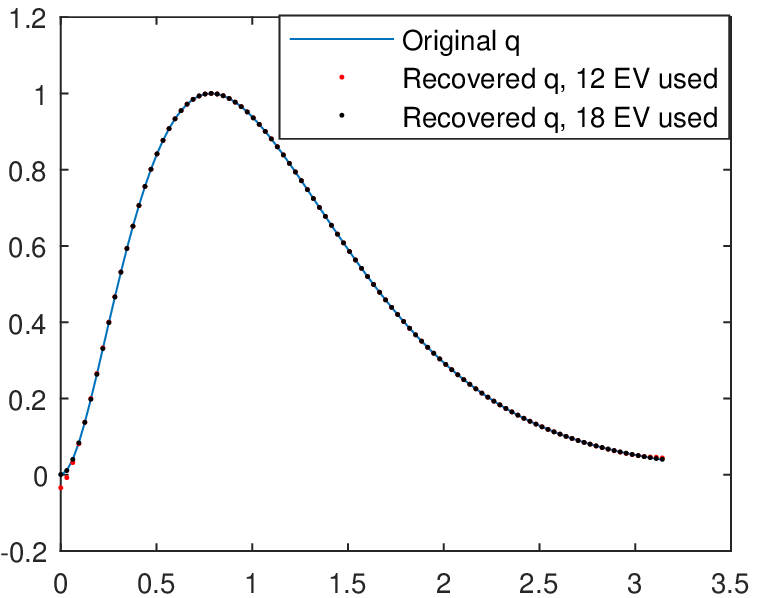} \quad\includegraphics[bb=0 0 216 173
height=2.2in,
width=2.7in
]{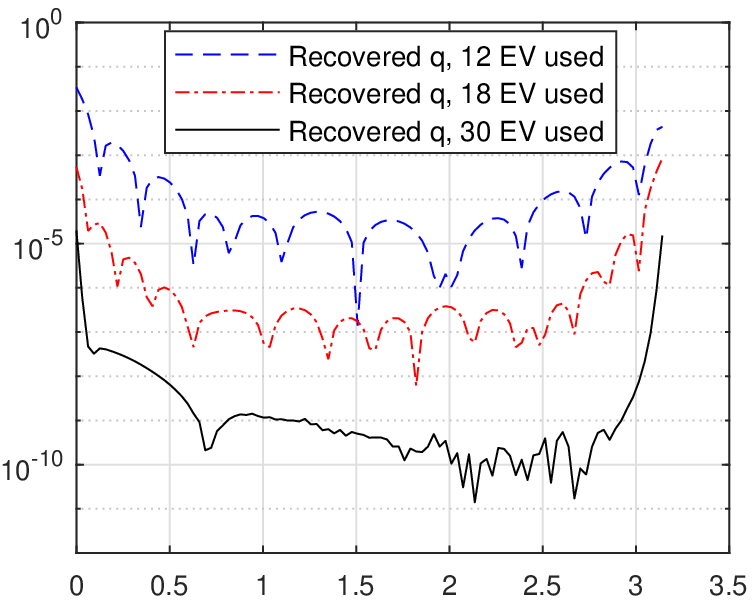}\caption{On the left: exact (blue line), recovered from 12
eigenvalues (red dots) and recovered from 18 eigenvalues (black dots)
potential $q_{1}$ from Subsection \ref{Subsect3spectra}. On the right:
absolute error of the recovered potential, 12, 18 or 30 eigenvalues were used.
}%
\label{Ex1Fig1}%
\end{figure}

On Figure \ref{Ex1Fig1} we present the recovered potential $q_{1}$ and its
absolute error. The absolute error rapidly decreases with more eigenvalues
used and stabilizes at 24 eigenvalues (8 from each spectra). $L_{1}$ error of
the recovered $q_{1}$ was $3.9\cdot10^{-7}$, the parameter $H$ had an absolute
error of $3.1\cdot10^{-7}$.

\begin{figure}[p]
\centering
\includegraphics[bb=0 0 216 173
height=2.2in,
width=2.7in
]{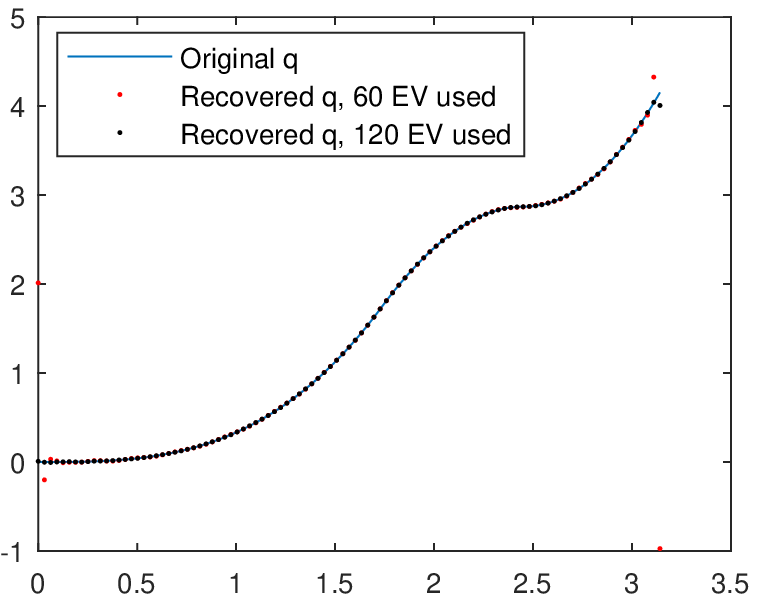} \quad\includegraphics[bb=0 0 216 173
height=2.2in,
width=2.7in
]{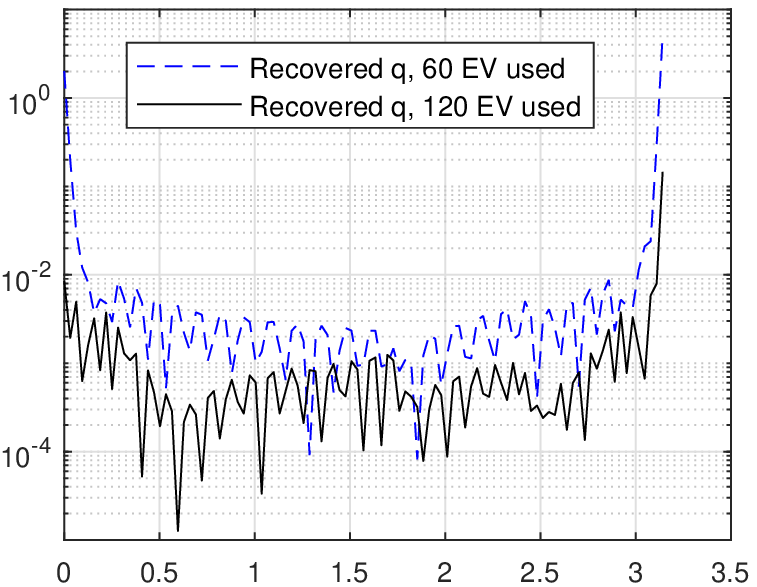}\caption{On the left: exact (blue line), recovered from 60
eigenvalues (red dots) and recovered from 120 eigenvalues (black dots)
potential $q_{4}$ from Subsection \ref{Subsect3spectra}. On the right:
absolute error of the recovered potential, 60 or 120 eigenvalues were used. }%
\label{Ex1Fig2}%
\end{figure}

\begin{figure}[p]
\centering
\includegraphics[bb=0 0 216 173
height=2.2in,
width=2.7in
]{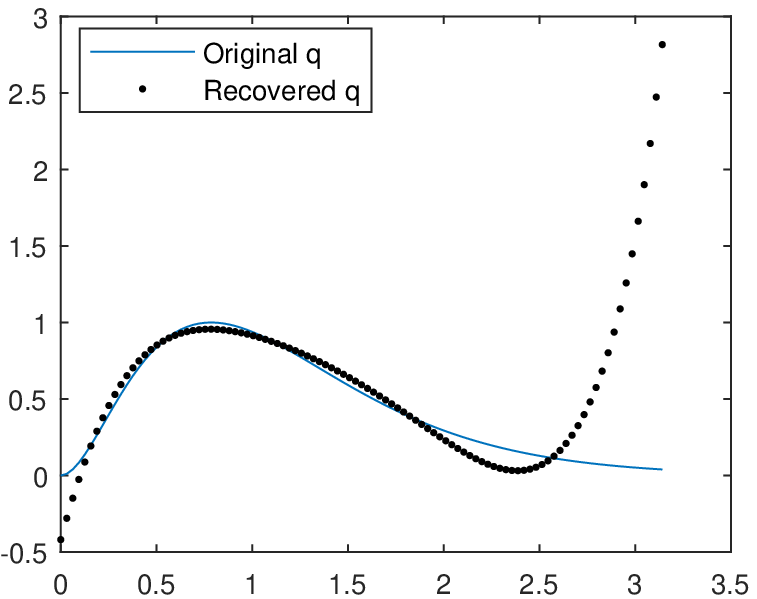} \quad\includegraphics[bb=0 0 216 173
height=2.2in,
width=2.7in
]{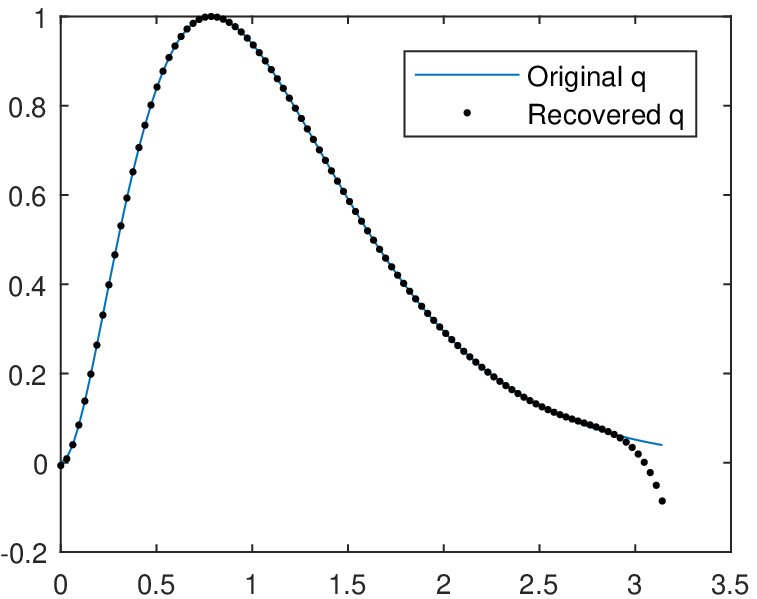}\caption{Exact potential $q_{1}$ from Subsection
\ref{Subsect3spectra} (blue line) and recovered one (black dots). On the left:
recovered from 10 eigenvalues $\lambda_{0}(h)$, $h\in\{0,1,2,3,4,5, 10, 20,
50, 100\}$, on the right: recovered from 30 eigenvalues $\lambda_{0}(h),
\lambda_{1}(h), \lambda_{2}(h)$, $h\in\{0,1,2,3,4,5, 10, 20, 50, 100\}$. }%
\label{Ex1Fig3}%
\end{figure}

On Figure \ref{Ex1Fig2} we present the recovered potential $q_{4}$ and its
absolute error. In this case the potential is of finite smoothness, so the
algorithm converges slowly. With 120 eigenvalues (40 from each spectrum) we
obtained the following errors: $L_{1}$ error of the potential was
$3.7\cdot10^{-3}$ and the error of the parameter $H$ was $1.5\cdot10^{-3}$.

In \cite[Section 3.14]{Chadan et al 1997} the following inverse problem was
considered. Suppose that the first eigenvalue is known for spectral problems
having the same potential but different boundary conditions at $x=0$. From
countably many values the potential can be recovered uniquely. However, if
only finitely many values are given, the situation changes. The corresponding
numerical problem is extremely ill-posed, so the method from \cite{Rundell
Sacks} was not able to solve it, and the modification from \cite{Chadan et al
1997} was able to recover only the general shape of the potential.

We tried the proposed algorithm on this problem. For a numerical example we
took the potential $q_{1}$ from Subsection \ref{Subsect2spectra}, $H=0$ and
$h\in\{0,1,2,3,4,5, 10, 20, 50, 100\}$. On Figure \ref{Ex1Fig3}, left plot, we
present the recovered potential. The algorithm failed, it is a case of very
small interval for $z_{k}$. And the result does not improve if we take more
values of the parameter $h$. However the situation greatly improves if instead
of 1 eigenvalue for each spectral problem we take several. On Figure
\ref{Ex1Fig3}, right plot, we present the recovered potential for the inverse
problem when the first 3 eigenvalues were taken from each spectrum.

\subsection{Partially known potential}

\label{SubsectPartial} In this subsection we illustrate the performance of the
proposed method applied to solution of inverse problems with a partially known
potential. For comparison we have taken two potentials from \cite{KB2008}, a
smooth potential $q_{5}(x)=\frac{e^{x}-x^{2}}{12}$ and a non-smooth continuous
potential $q_{2}(x)=|3-|x^{2}-3||$. The potentials are known on $[0,\pi/2]$,
hence one spectrum is necessary to recover the potential. While in
\cite{KB2008} the Dirichlet boundary conditions are considered, we decided to
take mixed boundary conditions with $h=1$ and $H=2$ for both potentials.

To solve the inverse problem we followed the idea presented in Subsubsection
\ref{SubsubsectPartial}: solve the Cauchy problems with initial data
$v(\rho_{n},0)=1$, $v^{\prime}(\rho_{n},0)=h$ on the segment $[0,\pi/2]$,
compute the values of the Weyl-Titchmarsh $m$-function
\[
m(\pi/2,\rho_{n}^{2})=\frac{v^{\prime}(\rho_{n},\pi/2)}{v(\rho_{n},\pi/2)},
\]
transform them to the Weyl function $M$ using \eqref{m and M}. After a simple
rescaling we get Problem \ref{Problem Weyl_countable}.

For the numerical example 40 eigenvalues with 1 missing eigenvalue were
considered in \cite{KB2008}. For the potential $q_{5}$ we similarly considered
40 eigenvalues, but with 2 eigenvalues missing: $\lambda_{10}$ and
$\lambda_{35}$. For the potential $q_{2}$ one eigenvalue $\lambda_{35}$ was
missing. We would like to emphasize that we used the proposed method
\textquotedblleft as is\textquotedblright, without recovering missing
eigenvalues (which is impossible, since the algorithm has no information if
the points $z_{n}$ correspond to any eigenvalues).

On Figures \ref{Ex2Fig1} and \ref{Ex2Fig2} we present the recovered potentials
and their absolute errors. One can appreciate an especially excellent accuracy
in the case of a smooth potential.

\begin{figure}[p]
\centering
\includegraphics[bb=0 0 216 173
height=2.2in,
width=2.7in
]{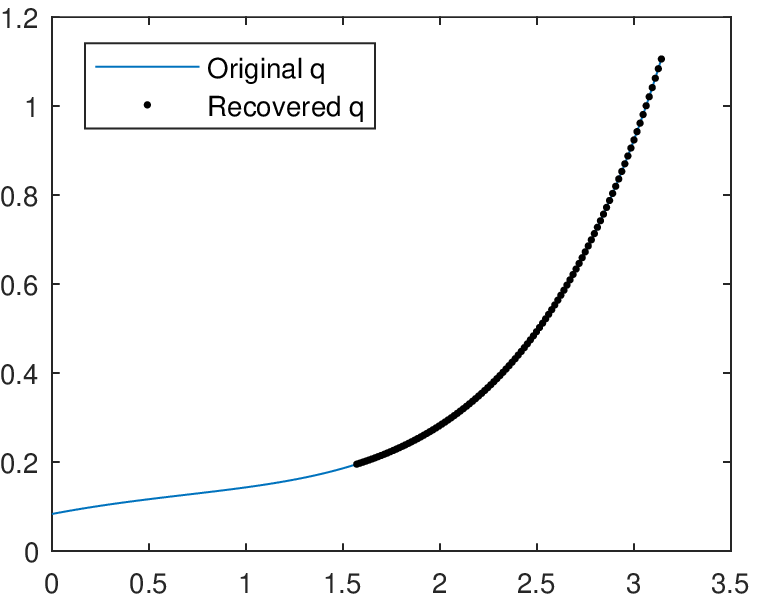} \quad\includegraphics[bb=0 0 216 173
height=2.2in,
width=2.7in
]{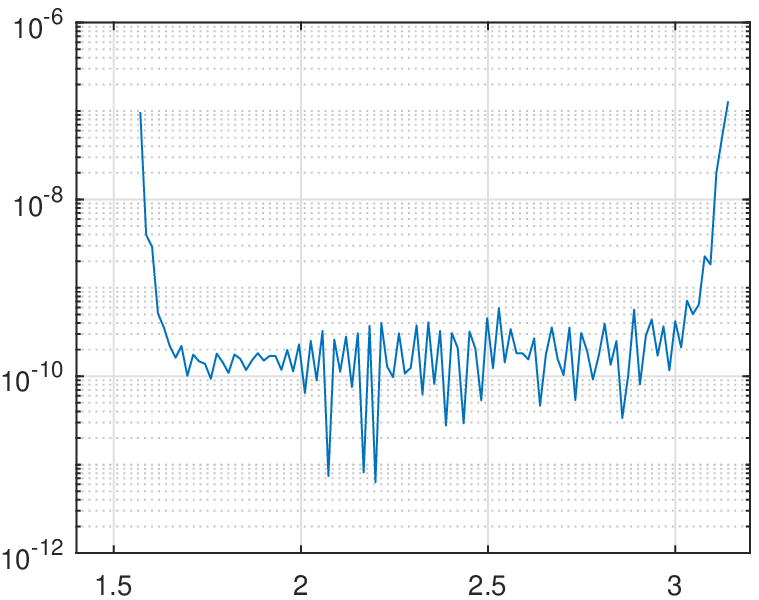}\caption{On the left: exact (blue line) and recovered (black
dots) potential $q_{5}$ from Subsection \ref{SubsectPartial}. On the right:
absolute error of the recovered potential. Eigenvalues $\{\lambda_{n}%
\}_{n=0}^{39}\setminus\{\lambda_{10},\lambda_{35}\}$ were used. The error of
the recovered parameter $H$ is $5.0\cdot10^{-10}$, and the $L_{1}$ error of
the recovered potential is $3.0\cdot10^{-9}$. }%
\label{Ex2Fig1}%
\end{figure}

\begin{figure}[p]
\centering
\includegraphics[bb=0 0 216 173
height=2.2in,
width=2.7in
]{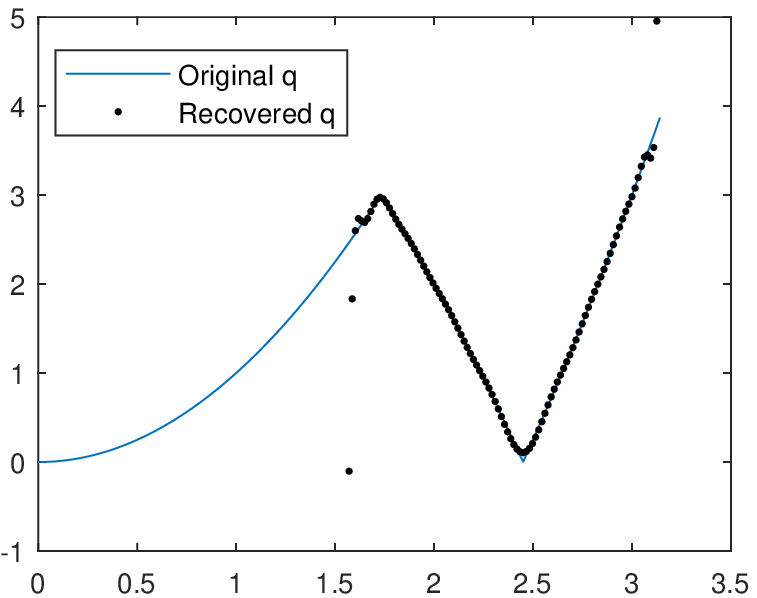} \quad\includegraphics[bb=0 0 216 173
height=2.2in,
width=2.7in
]{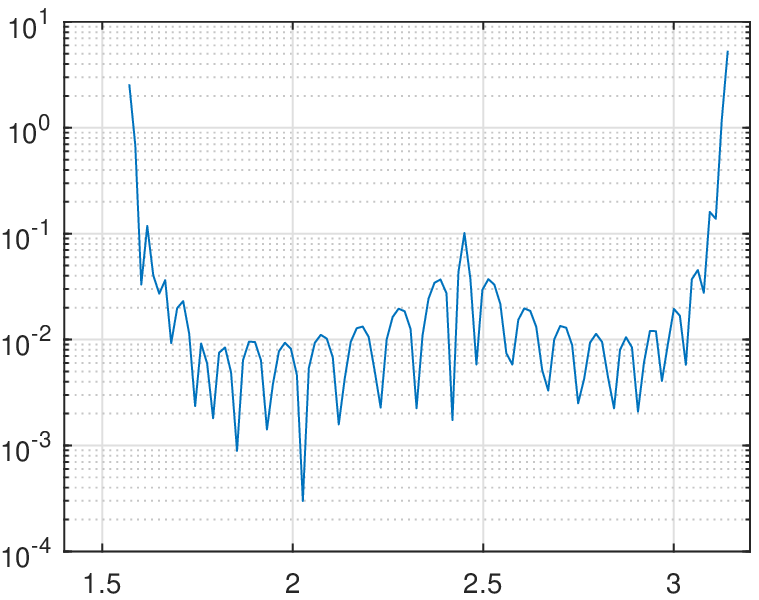}\caption{On the left: exact (blue line) and recovered (black
dots) potential $q_{2}$ from Subsection \ref{SubsectPartial}. On the right:
absolute error of the recovered potential. Eigenvalues $\{\lambda_{n}%
\}_{n=0}^{39}\setminus\{\lambda_{35}\}$ were used. The error of the recovered
parameter $H$ is $2.5\cdot10^{-2}$, and the $L_{1}$ error of the recovered
potential is $0.11$.}%
\label{Ex2Fig2}%
\end{figure}

\subsection{Recovery of the potential from values of the Weyl function:
complex points $z_{n}$ and some problematic choices}

\label{Subsect Recovery from M} The proposed algorithm works even when the
numbers $z_{n}$ are complex (however for numerical stability the imaginary
parts should be relatively small comparing to the real parts). For
illustration we considered the potential $q_{1}$ from Subsection
\ref{Subsect2spectra}, $h=1$, $H=2$ and the following sets of points
\begin{equation}
z_{n}^{(k)}=\left(  \frac{1}{5}+\frac{n}{2}+k\cdot i\right)  ^{2},\qquad0\leq
n\leq40,\ k\in\{0,1,2\}. \label{z_n complex}%
\end{equation}
The obtained results are presented on Figure \ref{Ex3Fig1}, left plot. The
algorithm was able to recover both the potential and the boundary conditions,
however the accuracy decreases when the points are taken farther from the real line.

\begin{figure}[p]
\centering
\includegraphics[bb=0 0 216 173
height=2.2in,
width=2.7in
]{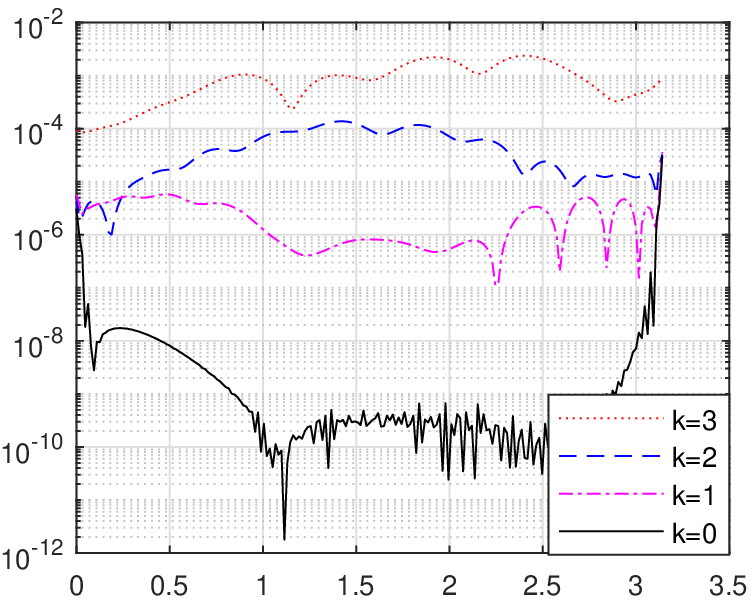} \quad\includegraphics[bb=0 0 216 173
height=2.2in,
width=2.7in
]{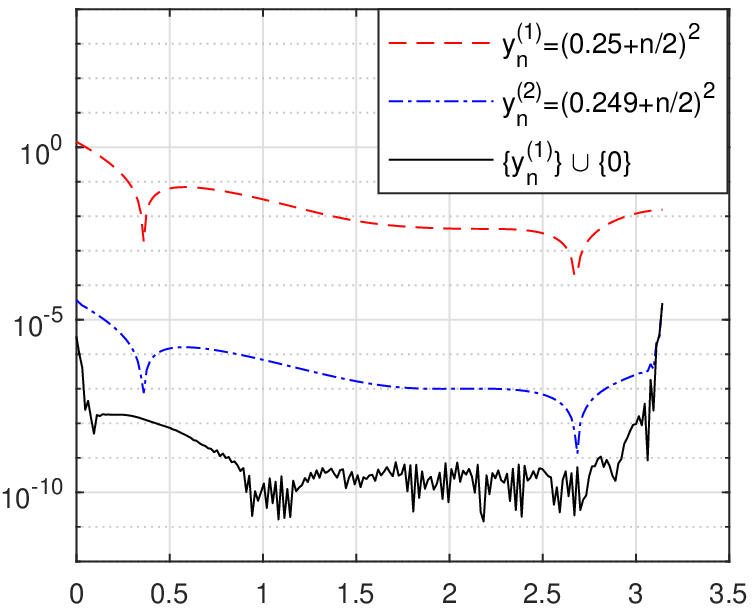}\caption{Absolute errors of the recovered potential $q_{1}$
from Subsection \ref{Subsect Recovery from M}. On the left: using the points
\eqref{z_n complex} with different values of $k$. On the right: using the
points shown on the legend, $0\le n\le40$.}%
\label{Ex3Fig1}%
\end{figure}

It should be mentioned that the proposed algorithm may fail for some
particular choices of the points $z_{n}$ even for nice potentials. We
considered the same problem and the following set of points
\[
y_{n}^{(1)}=\left(  \frac{1}{4}+\frac{n}{2}\right)  ^{2},\qquad0\leq n\leq40.
\]
The truncated system \eqref{System for xin truncated} was not ill conditioned,
but its solution was nowhere close to the exact one and the situation does not
improve when taking more or less unknowns. For example, for 9 unknowns
$\xi_{0},\ldots,\xi_{8}$ the parameter $h=\omega-\omega_{2}$ resulted to be
$0.7999$ instead of $1$. The best value was achieved for 10 unknowns and that
was $0.8677$. On Figure \ref{Ex3Fig1}, right plot we present the absolute
error of the recovered potential in comparison with those recovered using a
very close set of points
\[
y_{n}^{(2)}=\left(  0.249+\frac{n}{2}\right)  ^{2},\qquad0\leq n\leq40.
\]
For the latter choice of points, the recovered parameter $h$ resulted to be
$0.999997$ and the potential was recovered to 5 extra decimal digits. It
should be noted that the sets $\{y_{n}^{(1)}\}_{n=0}^{\infty}$ and
$\{y_{n}^{(2)}\}_{n=0}^{\infty}$ do not satisfy neither the condition
\eqref{Condition on lambda n} nor the condition from \cite{Horvath2005}, so any behavior can be expected for finite
subsets. And the set $\{y_{n}^{(1)}\}_{n=0}^{\infty}$ is, in some sense, the
worst possible since the points $y_{n}^{(1)}$ are the most distant from any
eigenvalue asymptotics (numbers close either to integers or integers plus one
half). The situation changes completely if we consider the set $\{0\}\cup
\{y_{n}^{(1)}\}_{n=0}^{\infty}$. This set satisfies the condition
\eqref{Condition on lambda n}, and the proposed method works for it without
any problem, see Figure \ref{Ex3Fig1}, right plot. The recovered parameter $h$
was $0.9999999991$.

\section{Conclusions}

A direct method for recovering the Sturm-Liouville problem from the
Weyl-Titchmarsh function given on a countable set of points is developed,
which leads to an efficient numerical algorithm for solving a variety of
inverse Sturm-Liouville problems. The main role in the proposed approach is
played by the coefficients of the Fourier-Legendre series expansion of the
transmutation operator integral kernel. We show how the given spectral data
lead to an infinite system of linear algebraic equations for the coefficients,
and the crucial observation is that for recovering the potential it is
sufficient to find the first coefficient only. In practical terms this
observation translates into the fact that very few equations in the truncated
system are enough for solving the inverse Sturm-Liouville problem.

The method is simple, direct and accurate. Its limitations are all related to
a possible insufficiency of the input data. A too reduced number of eigendata
or a special choice of points at which the Weyl function is given may lead to
inaccurate results, as we illustrate by numerical examples.

\section*{Acknowledgements}

Research was supported by CONACYT, Mexico via the project 284470. Research of
Vladislav Kravchenko was supported by the Regional mathematical center of the
Southern Federal University, Russia.

\end{document}